\begin{document}
\sloppy
\title{There is no $(95,40,12,20)$ strongly regular graph}

\author{
Jernej Azarija $^{a}$
\and
Tilen Marc $^{a}$
}

\date{\today}

\maketitle
\begin{center}
$^a$ Institute of Mathematics, Physics and Mechanics, Ljubljana, Slovenia \\
{\tt jernej.azarija@gmail.com}
\\
{\tt tilen.marc@imfm.si}
\medskip
\end{center}

\begin{abstract}
We show that there is no $(95,40,12,20)$ strongly regular graph and, consequently, there is no $(96,45,24,18)$ strongly regular graph, no two-graph on $96$ vertices, and no partial geometry $\rm{pg}(5,9,3)$. The main idea of the result is based on the star complement technique and requires a small amount of computation.
\end{abstract}

\newtheorem{theorem}{Theorem}
\newtheorem{prop}{Proposition}
\newtheorem{lemma}{Lemma}
\newcommand{\comp}{\rm{Comp}(H,\lambda)}

\section{Introduction}

A $k$-regular graph of order $v$ is said to be $(v,k,\lambda, \mu)$ {\em strongly regular} (SRG in short) if for any two distinct vertices $x,y \in V(G)$ the intersection $N(x) \cap N(y)$ has cardinality $\lambda$ if $x$ and $y$ are adjacent and $\mu$ otherwise. While there are certain necessary conditions on $v,k,\lambda,\mu$ there is no general way to determine whether a $(v,k,\lambda,\mu)$ strongly regular graph exist for a given choice of $v,k,\lambda$ and $\mu$. For example, at the time of writing this article, the existence of a $(65,32,15,16)$ SRG is not settled and is in fact the smallest case for which the existence of a SRG is not known. See Brouwer's web-page \cite{Brower} for the current state of affairs on the classification of SRG's as well as \cite{cohen-2016} for the current initiative of bringing Brouwer's classification into Sage. 

Given the difficulty of providing a general answer to the existence of strongly-regular graphs, many results targeting specific parameters accumulated over the years. For example, Haemers showed~\cite{haemers} that there is no $(76,21,2,7)$ SRG while Degraer ruled out~\cite{degr} the existence of a  $(96,38,10,18)$ SRG. The problem of determining the existence of $(75,32,10,16)$ and $(95,40,12,20)$ strongly-regular graphs was given a particular significance in \cite{godsil2001algebraic} for two reasons. On one hand, they fall into the small class of unresolved cases having less than $100$ vertices. Second, the (non-)existence of $(95,40,12,20)$ strongly-regular graphs determines the (non-)existence of $(96,45,24,18)$ strongly regular graphs as well as of the (non-)existence of so called two-graphs on 96 vertices  \cite[Chapter 11]{godsil2001algebraic}. Additional significance is given by the property that the non-existence of this graphs implies non-existence of so called partial geometry $\rm{pg}(5,9,3)$ (for the details, see \cite{Brower2}).  Certain structural results for this graphs were obtained through the years. In particular, for a $(95,40,12,20)$ SRG $X$ it was shown by Makhnev~\cite{Makhnev} that it does not contain a $20$-regular subgraph while Behbahani and Lam~\cite{Behbahani} showed that the only prime divisors of $|{\rm Aut}(X)|$ are $2,3$ and $5$. A recent result also showed that an independent set of $X$ can only have cardinality $18$. Indeed, if $X$ had an independent set $S$ of size $19$, then the graph $X \setminus S$ would have been a $(76,30,8,14)$ SRG (see Theorem 9.4.1 in \cite{brouwer3}), but the non-existence of a $(76,30,8,14)$ SRG was recently settled by Bondarenko, Prymak and Radchenko~\cite{bono}.

In \cite{self-cite} we showed that there is no $(75,32,10,16)$ strongly-regular graph. In this paper we continue our work and use the developed approach to show the infeasibility of the parameter $(95,40,12,20)$.  The outline of the proof is surprisingly similar to what is presented in \cite{self-cite} and therefore, for the sake of brevity, we give a slightly shorter overview of the tools that we use, and refer the reader to~\cite{self-cite} for additional references.

\section{Preliminaries} 

In what follows $X$ will denote a $(95,40,12,20)$ strongly regular graph. The idea of our approach is based on three steps. First, we build a list of graphs $\mathcal{L}$ such that at least one member of $\mathcal{L}$ is an induced subgraph of $X$. In addition every graph in $\mathcal{L}$ has precisely 20 vertices and does not have $2$ as an eigenvalue - we call such graphs {\em star complements}. In the second part we compute the so called {\em comparability graphs} for graphs in $\mathcal{L}$. For our purposes a comparability graph is a graph with vertex set 
$$ V(\rm{comp}(H)) = \{  u \in \{0,1\}^{n-k} \mid \langle u, u \rangle = 2 \mbox{ and } \langle u, \overrightarrow{1} \rangle = -1\}\,, $$ 
and adjacency defined as $$u \sim v \iff \langle u, v \rangle \in \{-1,0\}\,,$$ 
where $A_H$ is the adjacency matrix of $H$. Finally, for every such comparability graph we show that its clique number is smaller than $75$. The theory of star complements then guarantees that $X$ does not exist. See~\cite{Star} for a general overview of the star complement technique and \cite{Milosevic} for an application of this approach for classifying $(57,14,1,4)$ SRG's. 

In order to be able to build a small enough list $\mathcal{L}$ we use a well-known interlacing criterion explained in~\cite{self-cite}. This is possible since eigenvalues of strongly-regular graphs are easy to compute: in particular, $X$ must have eigenvalues $40, 2,-10$ of multiplicities $1,75,19$, respectively, see \cite{Brower}.

Since the interlacing criterion is by far not sufficient, we additionally need to prove certain structural results about $X$. This is done in the next section. Some of the lemmas that follow, rely on small Sage programs that generate certain induced structure of $X$. Every statement indicating that it was obtained computationally is marked in Table \ref{table:sageprog} that also lists the name of the corresponding Sage program used in the proof. All the programs and source code used in the paper can be obtained on the author's GitHub page~\cite{GitHub}.

\section{Clique structure of $X$}

The well known Hoffman's inequality \cite[pp. 204]{godsil2001algebraic} bounds the size of a independent set in a regular graph with a given least eigenvalue. In particular for our SRG $X$, Hoffman's bound implies that $\overline{X}$ must have maximal independent set of size 5, thus the clique number of $X$ is at most $5$. In addition, the recent bound on the number of $4$-cliques of a strongly-regular graph \cite{bono} implies that the clique number of $X$ is either $4$ or $5$. In this section we will show the following:

\begin{prop} \label{prop:clnum}
If $X$ exists, its clique number is $5$. Moreover, every $4$-clique of $X$ is contained in a $5$-clique.
\end{prop}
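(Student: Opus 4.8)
The plan is to reduce Proposition~\ref{prop:clnum} to the single assertion that \emph{every} $4$-clique of $X$ lies in a $5$-clique. Indeed, by the bound on the number of $4$-cliques from~\cite{bono} quoted above, $X$ contains at least one $4$-clique; once I know every $4$-clique extends to a $5$-clique, $X$ has a $5$-clique, and since Hoffman's bound already forces the clique number to be at most $5$, both conclusions of the proposition follow at once.

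So I fix a $4$-clique $K=\{a,b,c,d\}$ and classify the $91$ remaining vertices according to how they attach to $K$: let $n_i$ denote the number of vertices outside $K$ adjacent to exactly $i$ of $a,b,c,d$, for $0\le i\le 4$. Counting vertices, incidences between external vertices and $K$, and external common neighbours of the $\binom{4}{2}=6$ edges of $K$ (each edge has $\lambda=12$ common neighbours, two of which lie in $K$) yields
\[
\sum_{i=0}^{4} n_i = 91,\qquad \sum_{i=0}^{4} i\,n_i = 4\cdot 37 = 148,\qquad \sum_{i=0}^{4}\binom{i}{2}n_i = 6\cdot 10 = 60 .
\]
Here $n_4$ is exactly the number of common neighbours of all of $K$, i.e. the number of $5$-cliques containing $K$, so the goal is to show $n_4\ge 1$. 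Suppose instead $n_4=0$. Solving the three identities gives $n_2 = 60-3n_3$, $n_1 = 28+3n_3$ and $n_0 = 3-n_3$, so nonnegativity forces $0\le n_3\le 3$; thus only four attachment profiles survive. I note that the obvious further quadratic count, obtained from the strongly regular identity $A^2 = 20I - 8A + 20J$ applied to the characteristic vector of $K$, reduces to $\sum_i i^2 n_i = 268$, which is the linear combination $2\cdot 60 + 148$ of the identities above and hence yields nothing new: parameter counting alone cannot eliminate the case $n_4=0$.

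To finish I would rule out each of the four surviving profiles by descending to data not controlled by the parameters of $X$ — the common-neighbour structure of the triangles and edges of $K$. Concretely, for a vertex $w$ adjacent to exactly two vertices of $K$, say $w\sim a,b$ and $w\not\sim c,d$, the set $N(w)\cap N(c)$ has size $\mu=20$ and already contains $a,b$; tracking these $\mu$-sets across the $n_2$ vertices, together with the restriction $n_3\le 3$ and the bound on the number of $4$-cliques of~\cite{bono}, overconstrains the neighbourhoods of $K$. Because only finitely many profiles remain, this elimination can be carried out by a bounded case analysis, in the Sage-assisted spirit of the rest of the paper.

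The main obstacle is precisely this last step. The three linear identities leave a genuine one-parameter family of feasible profiles with $n_4=0$, and as noted the natural quadratic refinement is vacuous; so the contradiction must come from triangle-level information — how the common neighbours of the faces of $K$ interact — which is not determined by $(95,40,12,20)$ alone and therefore requires either the external $4$-clique bound of~\cite{bono} or an explicit finite search.
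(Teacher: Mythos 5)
Your setup is sound and matches the paper's opening move: the reduction via Hoffman's bound to the claim that every $4$-clique extends to a $5$-clique, and the counting that pins the attachment profile of a non-extendable $4$-clique to the four vectors $(3,28,60,0)$, $(2,31,57,1)$, $(1,34,54,2)$, $(0,37,51,3)$ --- these are exactly the four cases the paper analyzes, and your observation that the quadratic count $\sum_i i^2 n_i = 268$ is a dependent linear combination is correct. But everything after that is a genuine gap, and it is the entire substance of the proof. You assert that tracking the $\mu$-sets of the $n_2$ vertices ``overconstrains'' the configuration and that ``a bounded case analysis'' in the neighbourhood of $K_4$ will finish; neither claim is substantiated, and the second is false in the form you state it. The paper's own analysis shows that local counting around $K_4$ \emph{cannot} eliminate two of the four cases: for $(1,34,54,2)$ it finds $46$ local configurations and for $(0,37,51,3)$ it finds $157$ local configurations, all consistent with every counting and interlacing constraint available. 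No finite search over attachment data of $K_4$ produces a contradiction there.

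What actually kills the four cases is a combination of tools absent from your sketch: the case $(3,28,60,0)$ needs Makhnev's theorem~\cite{Makhnev} that $X$ has no $20$-regular subgraph (structural lemmas force $X[X_2]$ to be $22$-regular with each $X[X_{i,j}]$ a disjoint union of cycles, and deleting those cycles leaves a forbidden $20$-regular graph); the case $(2,31,57,1)$ dies by eigenvalue interlacing of explicitly generated candidate subgraphs; and the two remaining cases are ruled out only \emph{globally}, by extending the $203$ feasible local configurations to $3225$ candidate star complements (induced subgraphs on $20$ vertices without eigenvalue $2$) and verifying computationally that none of the associated comparability graphs contains a clique of order $75$, which star complement theory shows would be necessary for any of these configurations to live inside a $(95,40,12,20)$ graph. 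In short, your proposal reproduces the first paragraph of the paper's argument and stops where the proof begins; the missing step is not a routine finite check but the paper's main technical content, and you acknowledge as much in your final paragraph.
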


Suppose that $X$ has a $4$-clique $K_4$ that is not contained in a $5$-clique of $X$ and let $\vec{b} = (b_0,b_1,b_2,b_3)$ be a vector where for $0 \leq i \leq 3$ we denote by $b_i$ the number of vertices of $V(X) \setminus V(K_4)$ having precisely $i$ neighbors in $K_4$. As it was explained and used in \cite{self-cite}, a formula from \cite{bono} gives possible candidates for $\vec{b}$. In particular, in graph $X$ it must hold  $\vec{b} \in \{(3, 28, 60, 0),(1,34,54,2),(2,31,57,1),(0,37,51,3) \}$. In the next four subsections we analyze each possibility for $\vec{b}$ showing that a $(95,40,12,20)$ SRG with such a configuration does not exists. 

\subsection{Case $(3, 28, 60, 0)$}

Let us denote with $X_0, X_1, X_2$ the subsets of vertices in $V(X)\setminus V(K_4) $ that have, respectively, 0, 1, 2 neighbors in $K_4$. Moreover, denote the vertices in  $X_0$ by $x_1,x_2,x_3$.

\begin{lemma} Every vertex in $X_2$ has precisely two neighbors in $X_0$.
\end{lemma}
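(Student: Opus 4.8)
The plan is to first pin down the local structure of each vertex of $X_0$, and then run an inclusion–exclusion count over $X_2$ driven by the three pairs of vertices in $X_0$.

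First I would fix a vertex $x_i \in X_0$. Since $x_i$ has no neighbour in $K_4$, each of the four vertices of $K_4$ is non-adjacent to $x_i$ and hence shares exactly $\mu = 20$ common neighbours with it; summing over $K_4$ gives $80$ incidences of the form (neighbour of $x_i$, vertex of $K_4$ adjacent to it). On the other hand, a neighbour $y$ of $x_i$ contributes to this sum exactly as often as it has neighbours in $K_4$, namely $0$ if $y \in X_0$, $1$ if $y \in X_1$, and $2$ if $y \in X_2$ (there being no other case, since $b_3 = 0$ and $K_4$ lies in no $5$-clique). Writing $n_0^{(i)}, n_1^{(i)}, n_2^{(i)}$ for the number of neighbours of $x_i$ in $X_0, X_1, X_2$, this yields $n_1^{(i)} + 2 n_2^{(i)} = 80$, while the degree condition gives $n_0^{(i)} + n_1^{(i)} + n_2^{(i)} = 40$. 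Subtracting, $n_2^{(i)} - n_0^{(i)} = 40$, and since $n_2^{(i)} \le 40$ this forces $n_0^{(i)} = 0$, $n_2^{(i)} = 40$, and $n_1^{(i)} = 0$. Hence every $x_i$ has all $40$ of its neighbours in $X_2$, so $X_0$ is independent and there are no edges between $X_0$ and $X_1$.

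Next I would set $A = N(x_1)$, $B = N(x_2)$, $C = N(x_3)$, which by the previous step are $40$-subsets of $X_2$. Since $X_0$ is independent, its three vertices are pairwise non-adjacent, so each pair shares $\mu = 20$ common neighbours, giving $|A \cap B| = |A \cap C| = |B \cap C| = 20$. Letting $n_j$ be the number of vertices of $X_2$ lying in exactly $j$ of the sets $A, B, C$, the quantity I want to control — the number of neighbours in $X_0$ of a vertex $w \in X_2$ — is exactly the number of these sets containing $w$. The standard inclusion–exclusion identities give $n_0 + n_1 + n_2 + n_3 = 60$, $n_1 + 2 n_2 + 3 n_3 = |A| + |B| + |C| = 120$, and $n_2 + 3 n_3 = |A \cap B| + |A \cap C| + |B \cap C| = 60$. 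Solving this system yields $n_0 + n_3 = 0$, hence $n_0 = n_3 = 0$, $n_1 = 0$, and $n_2 = 60$; that is, every vertex of $X_2$ lies in exactly two of $A, B, C$ and therefore has exactly two neighbours in $X_0$, as claimed.

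The only genuinely delicate point is the first step. A crude edge count between $X_0$ and $X_2$ alone only shows the average number of $X_0$-neighbours over $X_2$ equals $2$, which is far weaker than the pointwise assertion. The leverage comes from observing that the inequality $n_2^{(i)} \le 40$ already rigidly collapses the neighbourhood of each $x_i$ into $X_2$, after which feeding the three independent pairwise $\mu$-conditions into inclusion–exclusion determines the full distribution $(n_0, n_1, n_2, n_3)$ rather than merely its mean.
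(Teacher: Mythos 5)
Your proof is correct and follows essentially the same route as the paper: first the $2$-path count to $K_4$ forcing all $40$ neighbours of each $x_i$ into $X_2$, then inclusion--exclusion over $N(x_1), N(x_2), N(x_3)$ using the pairwise $\mu=20$ conditions. The only cosmetic difference is that you solve the full system of counting identities for $(n_0,n_1,n_2,n_3)$, whereas the paper bounds $|N(x_1)\cup N(x_2)\cup N(x_3)|$ by $|X_2|=60$ to conclude the triple intersection is empty and reads off the same conclusion.
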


\begin{proof}
Let $x_i\in X_0$. We use an argument that will be repeatedly used in this paper. Since $x_i$ is not adjacent to any of the vertices in $K_4$ it has to have 20 common neighbors (since $X$ is strongly regular with $\mu=20$) with each vertex of $K_4$. Thus there are $4\cdot 20$ paths of length 2 from $x_i$ to $K_4$. On the other hand, $x_i$ has 40 neighbors ($X$ is 40-regular) in $X_0 \cup X_1 \cup X_2$. All the neighbors are in fact in $X_2$, for otherwise they could not form $80$ $2$-paths to $K_4$. Since for $j \in \{1,2,3\} \setminus \{i\}$ we have $|N(x_i)\cap N(x_j)|=20$ and $|N(x_i)|=40$, it follows 
$$60\geq |N(x_1) \cup N(x_2) \cup N(x_3)| = 3\cdot 40- 3\cdot 20 + |N(x_1) \cap N(x_2) \cap N(x_3)|\,,$$

by the inclusion-exclusion principle. Thus $N(x_1) \cap N(x_2) \cap N(x_3) = \emptyset$. Therefore every vertex in $X_2$ is adjacent to precisely two vertices in $X_0$.
\end{proof}

For $1\leq i<j\leq 3$ let $X_{i,j} \subseteq X_2$ be the set $N(x_i) \cap N(x_j)$. By the previous lemma, this sets are disjoint of order 20.

\begin{lemma}
Each of the graphs $X[X_{1,2}],X[X_{1,3}]$ and $X[X_{2,3}]$ is a disjoint union of cycles. Moreover the graph $X[X_{1,2} \cup X_{1,3} \cup X_{2,3}]$ is $22$-regular.
\end{lemma}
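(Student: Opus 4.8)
The plan is to pin down the local structure around a single vertex of $X_{1,2}$ by counting its neighbors inside each of the three classes $X_{1,2},X_{1,3},X_{2,3}$, using only the parameters $\lambda=12$ and $\mu=20$ together with a clean description of the neighborhoods of $x_1,x_2,x_3$. Everything reduces to one exact double count.

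First I would record a structural fact implicit in the previous lemma. Since each $x_i$ has all $40$ of its neighbors in $X_2$, and every vertex of $X_2$ lies in exactly one of the disjoint sets $X_{1,2},X_{1,3},X_{2,3}$ (each being a neighbor of exactly two of the $x_i$), the neighborhood of each $x_i$ splits cleanly as
\begin{align*}
N(x_1) &= X_{1,2}\cup X_{1,3}, \\
N(x_2) &= X_{1,2}\cup X_{2,3}, \\
N(x_3) &= X_{1,3}\cup X_{2,3}.
\end{align*}
Each such union has $20+20=40$ vertices, matching $\deg(x_i)=40$, so these are genuine equalities and not merely inclusions.

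Next I would fix $u\in X_{1,2}$ and let $a,c,d$ denote the number of neighbors of $u$ in $X_{1,2},X_{1,3},X_{2,3}$ respectively. Because $u$ is adjacent to $x_1$ and $x_2$ but not to $x_3$, strong regularity gives $|N(u)\cap N(x_1)|=|N(u)\cap N(x_2)|=\lambda=12$ and $|N(u)\cap N(x_3)|=\mu=20$. Since each $N(x_i)$ is contained in $X_2$, every vertex in these three intersections lies in one of the classes above, so substituting the decomposition turns the intersection counts into the linear system
\begin{align*}
a+c &= 12, \\
a+d &= 12, \\
c+d &= 20,
\end{align*}
whose unique solution is $a=2$ and $c=d=10$.

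Finally I would read off the statement. The equality $a=2$ holds for every vertex of $X_{1,2}$, and by the symmetry among $x_1,x_2,x_3$ the analogous count holds for every vertex of each class; hence $X[X_{1,2}]$, $X[X_{1,3}]$ and $X[X_{2,3}]$ are $2$-regular and therefore disjoint unions of cycles. Summing the within- and between-class degrees, each vertex of $X_2=X_{1,2}\cup X_{1,3}\cup X_{2,3}$ has $a+c+d=2+10+10=22$ neighbors inside $X_2$, so $X[X_2]$ is $22$-regular. I do not expect a serious obstacle here, since the whole argument is one exact counting; the single point requiring care is the claim that the three intersections are entirely contained in $X_2$, which is precisely what lets the intersection counts become an exact linear system rather than an inequality.
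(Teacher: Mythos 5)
Your proof is correct, and it reaches the conclusion by a different computation than the paper. The paper's proof fixes $v\in X_{1,2}$ and double-counts $2$-paths from $v$ to $K_4$ (the counting tool it reuses throughout), obtaining $2\cdot 12+2\cdot 20 = 2k+(40-2-2-k)+2\cdot 3$ and hence $k=22$ neighbors in $X_2$; it then applies a single $\mu$-count against $x_3$, whose neighborhood is exactly $X_2\setminus X_{1,2}$, to conclude that $20$ of these neighbors lie outside $X_{1,2}$ and therefore exactly $2$ inside. You instead bypass the clique entirely: using the decomposition $N(x_i)$ as a union of two of the classes $X_{1,2},X_{1,3},X_{2,3}$, you turn the three strong-regularity counts against $x_1,x_2,x_3$ into the exact linear system $a+c=12$, $a+d=12$, $c+d=20$. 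Your route is more symmetric, avoids any bookkeeping of $v$'s neighbors in $X_1$ and $K_4$, and yields strictly finer information than the paper states: not only $a=2$, but also that each vertex of $X_{1,2}$ has exactly $10$ neighbors in each of $X_{1,3}$ and $X_{2,3}$, whereas the paper only records the combined count of $20$. The paper's route, in exchange, leans on the one technique it has already set up and applies verbatim in the other cases of the section. Both arguments rest on the same prerequisite, which you correctly identified as the delicate point: that all neighbors of each $x_i$ lie in $X_2$ (so the intersections decompose exactly), a fact established in the proof of the preceding lemma.
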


\begin{proof}
 Let $v$ be a vertex of $X_{1,2}$. We count the number of $2$-paths from $v$ to $K_4$. Since it is adjacent to $2$ vertices of it, there must be $2\cdot 12 + 2\cdot 20$ such paths. Denote with $k$ the number of neighbors of $v$ in $X_2$. By the previous lemma, $v$ is adjacent to $2$ vertices in $X_0$, thus it is adjacent to $40-2-2-k$ vertices in $X_1$. Now we count 2-paths:
$$2\cdot 12 + 2\cdot 20=2k+1(40-2-2-k)+0\cdot 2+2\cdot 3\,.$$
 
Therefore, $v$ has $k=22$ neighbors in $X_2$,  and since it is not adjacent to $x_3$ it must have $20$ neighbors in $X_2 - X_{1,2} = N(x_3)$. This implies that $v$ has precisely 2 neighbors in $X_{1,2}$.
\end{proof}

In \cite{Makhnev} it was shown that if graph $X$ exists, then it does not have a 20-regular subgraph. Consider the induced subgraph on $X_2$ and remove the disjoint unions of cycles in 
$X_{1,2},X_{1,3},X_{2,3}$. We obtain a 20-regular subgraph and hence this configuration is impossible.

\subsection{Case $(1,34,54,2)$}

Let $X_0,X_1, X_2, X_3$ be the sets of vertices having $0,1,2$, and $3$ neighbors in $K_4$, respectively. In particular, let $x_0 \in X_0$ and $x_1\ne x_2 \in X_3$.

\begin{lemma} \label{lem:tab1}
Vertices $x_1$ and $x_2$ are not adjacent.
\end{lemma}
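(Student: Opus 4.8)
The plan is to argue by contradiction: assume $x_1\sim x_2$ and extract enough local structure to violate the $\lambda=12$/$\mu=20$ counting or Makhnev's theorem. The first step is to record the $2$-path counts to $K_4$ that drive every argument in this paper. A vertex having $i$ neighbours in $K_4$ sends exactly $12i+20(4-i)=80-8i$ paths of length $2$ to $K_4$, of which $3i$ pass through a vertex of $K_4$; hence it sends $80-11i$ such paths through its neighbours outside $K_4$, and this number equals $\sum_w \tau(w)$, where $w$ ranges over its neighbours outside $K_4$ and $\tau(w)$ is the number of neighbours of $w$ in $K_4$. For $x_1$ and $x_2$ (type $3$) these totals are $56$ paths to $K_4$, of which $47$ run through external neighbours. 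Applying the same count to the unique vertex $x_0\in X_0$, writing $a_i$ for the number of neighbours of $x_0$ in $X_i$ and using $a_0=0$, the equations $a_1+a_2+a_3=40$ and $a_1+2a_2+3a_3=80$ force $a_1=a_3$ and $a_2=40-2a_3$, with $a_3=|N(x_0)\cap\{x_1,x_2\}|\in\{0,1,2\}$.

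Next I would split on the two possible mutual types of $x_1$ and $x_2$. If they are adjacent to the same triple, say both to $\{v_1,v_2,v_3\}$, then $\{v_1,v_2,v_3,x_1,x_2\}$ is a $5$-clique $C$; since the clique number of $X$ is $5$, no further vertex is adjacent to all of $C$, so every common neighbour of $x_1$ and $x_2$ lying outside $C$ must miss at least one of $v_1,v_2,v_3$. Moreover no such common neighbour can have type of size $3$, since $X_3=\{x_1,x_2\}$; hence each of the $12-3=9$ external common neighbours of $x_1,x_2$ lies in $X_0\cup X_1\cup X_2$ and is adjacent to at most two of $v_1,v_2,v_3$. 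If instead $x_1,x_2$ are adjacent to different triples, say missing $v_4$ and $v_3$ respectively, they share only $v_1,v_2$ in $K_4$ and now have $12-2=10$ external common neighbours, again of type at most $2$ by the same $X_3=\{x_1,x_2\}$ obstruction.

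From here the strategy is to combine these type restrictions with the $2$-path balance of $47$ for each of $x_1,x_2$, the distribution of $x_0$ above, and the global edge counts between $K_4$ and $X_1\cup X_2\cup X_3$ fixed by the vector $(1,34,54,2)$, so as to over-determine the number of neighbours $x_1$ and $x_2$ have in each $X_i$. The expectation is that the resulting non-negative integer system is either infeasible or forces an induced $20$-regular subgraph of the kind excluded by Makhnev~\cite{Makhnev}. I expect the main obstacle to be that the type data alone does not pin down the adjacencies \emph{among} the shared external neighbours, so a purely hand count may not close the gap; in that event I would finish exactly as elsewhere in the paper, by a short computer enumeration of the feasible local neighbourhoods of $K_4\cup\{x_1,x_2\}$ (the same kind of search recorded in Table~\ref{table:sageprog}), verifying that none extends to a $(95,40,12,20)$ configuration with $x_1\sim x_2$.
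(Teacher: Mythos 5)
Your preliminary arithmetic is all correct and matches the tools the paper uses elsewhere: the $80-11i$ count of $2$-paths through external neighbours, the relations $a_1=a_3$, $a_2=40-2a_3$ for $x_0$, and the split into the two isomorphism types of $K_4\cup\{x_1,x_2\}$. But the proposal never derives a contradiction: the decisive step is announced as an ``expectation'' that the non-negative integer system will be infeasible or will force a $20$-regular subgraph, and you yourself concede that a hand count may not close the gap. That concession is a genuine gap, not a formality, and there is concrete evidence the counting route cannot be completed as hoped. In the ``different triples'' case, the natural combinatorial move is to apply the $\vec{b}$-classification to the new $4$-clique $\{v_1,v_2,x_1,x_2\}$; its two type-$3$ vertices are $v_3$ and $v_4$, which are adjacent, so this either reproduces exactly the configuration you are trying to exclude (case $(1,34,54,2)$ with adjacent $X_3$-vertices --- circular), or lands in case $(0,37,51,3)$, whose independence statement (Lemma~\ref{lem:tab4}) is itself proved by computer. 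Makhnev's theorem \cite{Makhnev} offers no escape: the paper invokes it only in case $(3,28,60,0)$, where a genuine $20$-regular subgraph appears, and nothing in your setup produces one here.

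The paper's actual proof is your fallback made precise: there are two graphs on $K_4\cup\{x_1,x_2\}$, and adjoining $x_0$ in all possible ways gives six $7$-vertex candidates, none of which passes the interlacing criterion of \cite{self-cite}; this is a finite Sage check (Table~\ref{table:sageprog}). Two details missing from your fallback are essential. First, ``verifying that none extends to a $(95,40,12,20)$ configuration'' is not a finite test; the finite certificate is interlacing. Second, the vertex $x_0$ must be included in the subgraphs being tested: the two $6$-vertex graphs on $K_4\cup\{x_1,x_2\}$ alone pass ordinary Cauchy interlacing (the ``different triples'' graph is $K_2\vee C_4$, with second eigenvalue $0$ and least eigenvalue $-2$; the ``same triple'' graph has second eigenvalue $\approx 0.37$), so a search restricted to neighbourhoods of $K_4\cup\{x_1,x_2\}$, as your last sentence describes, would not terminate the argument. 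One last remark: the ``same triple'' case does admit a clean hand proof that your case split walks right past. There $\{v_1,v_2,v_3,x_1,x_2\}$ is a $5$-clique and $v_4$ has three neighbours in it, contradicting the fact --- used in Lemma~\ref{lem:notiang} and obtainable by counting vertices, edges, and common neighbours against a $5$-clique --- that every vertex outside a $5$-clique of $X$ has exactly two neighbours in it. It is only the ``different triples'' case that genuinely needs the computation.
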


\begin{proof}
Suppose $x_1 \sim x_2$. There are up to isomorphism only two possible induced graphs on $K_4 \cup \{x_1,x_2\}$. Moreover, if we add the vertex $x_0$ we obtain 6 candidate graphs for an induced subgraph of $X$. None of them interlaces $X$, which was checked by a Sage program listed in Table \ref{table:sageprog}.
\end{proof}

\begin{lemma}\label{lem:b}
Vertex $x_0$ is adjacent to both vertices in $X_3$. Moreover, it has $2$ neighbors in $X_1$ and $36$ neighbors in $X_2$.
\end{lemma}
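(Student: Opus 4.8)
The plan is to pin down the neighbourhood of $x_0$ by a double-counting argument, using Lemma~\ref{lem:tab1} to control the edges incident to $X_3$. First I would count the $2$-paths from $x_0$ to $K_4$. Since $x_0$ has no neighbour in $K_4$, it shares exactly $\mu=20$ common neighbours with each of the four vertices of $K_4$, giving $80$ such paths; on the other hand every one of the $40$ neighbours of $x_0$ lies in $X_1\cup X_2\cup X_3$, and a neighbour in $X_i$ accounts for exactly $i$ of these $2$-paths. Writing $n_i$ for the number of neighbours of $x_0$ in $X_i$, the equations $n_1+n_2+n_3=40$ and $n_1+2n_2+3n_3=80$ force $n_2=40-2n_3$ and $n_1=n_3$, where $n_3\in\{0,1,2\}$ because $|X_3|=2$. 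Thus it suffices to prove $n_3=2$, since the asserted counts $n_1=2$ and $n_2=36$ follow at once.

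Next I would run the same $2$-path count from a vertex $x\in X_3$ that is \emph{not} adjacent to $x_0$. Such an $x$ is adjacent to exactly three vertices of $K_4$ and, by Lemma~\ref{lem:tab1}, to no vertex of $X_3$. Counting the $2$-paths from $x$ to $K_4$ (three contributing $\lambda=12$ each and one contributing $\mu=20$, minus the nine paths whose middle vertex lies inside $K_4$) together with the degree equation shows that $x$ has exactly $10$ neighbours in $X_2$. This quantitative fact is the crucial input for the final step, and it holds regardless of which vertex of $K_4$ the vertex $x$ misses, so no case distinction on the configuration of $X_3$ relative to $K_4$ is needed.

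Finally I would exclude $n_3\in\{0,1\}$ through the common-neighbour condition. If $x_0$ is not adjacent to some $x\in X_3$, then $x_0$ and $x$ must have $\mu=20$ common neighbours. None of these lies in $K_4$ (as $x_0$ has no neighbour there), and none lies in $X_3$ (the only candidate would be the other vertex of $X_3$, which by Lemma~\ref{lem:tab1} is non-adjacent to $x$), so all $20$ common neighbours lie in $X_1\cup X_2$. But $x$ has only $10$ neighbours in $X_2$, while $x_0$ has only $n_1=n_3\le 1$ neighbours in $X_1$ in the cases $n_3\in\{0,1\}$; hence there are at most $11$ common neighbours, a contradiction. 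Therefore $n_3=2$, which is exactly the statement that $x_0$ is adjacent to both vertices of $X_3$, with $n_1=2$ and $n_2=36$.

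The main obstacle is precisely this last step: the plain $2$-path count leaves three possibilities for $n_3$, and an interlacing check on the induced subgraph on $K_4\cup\{x_0,x_1,x_2\}$ does not settle it, since the configuration with $x_0$ adjacent to neither of $x_1,x_2$ still interlaces $X$. The decisive idea is instead to feed the derived distribution of $x$ (only $10$ neighbours in $X_2$) into the $\mu$-condition and observe that the $20$ forced common neighbours of $x_0$ and $x$ are squeezed into $X_1\cup X_2$ and simply cannot fit.
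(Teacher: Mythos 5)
Your proposal is correct and is essentially identical to the paper's proof: the same double count of $2$-paths from $x_0$ to $K_4$ (giving $n_1=n_3$), the same count showing that a vertex of $X_3$ non-adjacent to $x_0$ has exactly $10$ neighbours in $X_2$ (using Lemma~\ref{lem:tab1}), and the same final contradiction that the $\mu=20$ required common neighbours of $x_0$ and that vertex cannot fit into $X_1\cup X_2$ (at most $1+10=11$).
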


\begin{proof}
For the sake of contradiction, suppose $x_0$ is adjacent to $k \in \{0,1\}$ vertices of $X_3$. Let $t$ be the number of neighbors of $x_0$ in $X_1$. By double counting  $2$-paths from $x_0$ to $K_4$ we obtain: $$4\cdot 20 = 3k + t + 2(40-k-t)\,,$$ which gives that $k = t$. Without loss of generality suppose that $x_1$ is not adjacent to $x_0$. By counting the number of $2$-paths in a similar way we obtain that $x_1$ has $10$ neighbors in $X_2$. But by strong regularity, $x_0$ and $x_1$ must have $20$ common neighbors which is not possible since $x_0,x_1$ can share at most $k \leq 1$ common neighbors in $X_1$ and $10$ common neighbors in $X_2$. Hence $x_0$ is adjacent to both $x_1$ and $x_2$ and so $k = t = 2$ and the claim follows.
\end{proof}

In virtue of Lemma \ref{lem:b}, let $x_0',x_0''$ be the vertices in $X_1$ that are adjacent to $x_0$.

\begin{lemma}\label{lem:a}
Each vertex $x_1,x_2$ has $11$ neighbors in $X_2$.
\end{lemma}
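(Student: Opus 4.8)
The plan is to prove the claim for $x_1$; the argument for $x_2$ is identical by symmetry, since $x_1$ and $x_2$ play interchangeable roles in $X_3$. First I would record everything already known about the neighbourhood of $x_1$. Since $x_1 \in X_3$ it has exactly $3$ neighbours inside $K_4$; by Lemma~\ref{lem:b} it is adjacent to $x_0$, its unique possible neighbour in $X_0$; and by Lemma~\ref{lem:tab1} it is not adjacent to $x_2$, so it has no neighbour in $X_3$. Writing $a$ for the number of neighbours of $x_1$ in $X_1$ and $c$ for the number in $X_2$, the $40$-regularity of $X$ immediately gives the first linear relation $3 + 1 + a + c = 40$, that is, $a + c = 36$.

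For the second relation I would double count the $2$-paths from $x_1$ to $K_4$, exactly in the style used in the previous subsections. On one hand, since $x_1$ is adjacent to three vertices of $K_4$ and non-adjacent to the fourth, strong regularity produces $3\lambda + \mu = 3\cdot 12 + 20 = 56$ such paths. On the other hand I would count the same paths by their middle vertex $u$, weighting each neighbour $u$ of $x_1$ by its number of neighbours in $K_4$: the three vertices of $K_4$ adjacent to $x_1$ each contribute $3$ (their edges inside the clique), the vertex $x_0$ contributes $0$, each of the $a$ neighbours in $X_1$ contributes $1$, each of the $c$ neighbours in $X_2$ contributes $2$, and $X_3$ contributes nothing. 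This yields $3\cdot 3 + a + 2c = 56$, i.e. $a + 2c = 47$.

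Solving the system $a + c = 36$ and $a + 2c = 47$ gives $c = 11$ (and $a = 25$), so $x_1$ has precisely $11$ neighbours in $X_2$, as claimed, and the same computation applied to $x_2$ completes the proof. I do not expect a genuine obstacle here, as this is a routine double count. The only points requiring care are to correctly feed in the adjacency facts from Lemmas~\ref{lem:b} and~\ref{lem:tab1}, so that the contributions over $X_0$ and $X_3$ are pinned down, and to remember that a vertex of $K_4$ adjacent to $x_1$ still contributes its three intra-clique edges to the $2$-path count.
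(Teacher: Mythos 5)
Your proof is correct and is exactly the argument the paper intends: the paper's proof is the single line ``the lemma follows by double counting $2$-paths to $K_4$,'' and your computation ($3\cdot 12 + 20 = 56$ paths, counted by middle vertex using $x_1\sim x_0$ from Lemma~\ref{lem:b} and $x_1\not\sim x_2$ from Lemma~\ref{lem:tab1}, giving $a+c=36$ and $a+2c=47$, hence $c=11$) is precisely the omitted detail, carried out correctly.
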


\begin{proof}
The lemma follows by double counting 2-paths to $K_4$.
\end{proof}

\begin{lemma}\label{lem:d}
For $i=1,2$, the vertex $x_i$ is adjacent to at least one of the vertices in $\{x_0',x_0''\}$.
\end{lemma}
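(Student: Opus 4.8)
The plan is to fix $i \in \{1,2\}$ and examine the $\lambda = 12$ common neighbors guaranteed by strong regularity for the adjacent pair $x_0, x_i$ (adjacency coming from Lemma \ref{lem:b}). First I would localize these $12$ common neighbors within the partition $V(X) = V(K_4) \cup X_0 \cup X_1 \cup X_2 \cup X_3$. Since $x_0 \in X_0$ has no neighbor in $K_4$, none of the common neighbors lie in $K_4$; the set $X_0 = \{x_0\}$ contributes nothing; and $X_3 = \{x_1,x_2\}$ contributes nothing either, because the only candidate there is $x_{3-i}$, which by Lemma \ref{lem:tab1} is non-adjacent to $x_i$. Hence all $12$ common neighbors lie in $X_1 \cup X_2$.

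Next I would count the contribution from each of $X_1$ and $X_2$. By Lemma \ref{lem:b} the only neighbors of $x_0$ in $X_1$ are $x_0'$ and $x_0''$, so the common neighbors of $x_0$ and $x_i$ in $X_1$ are exactly those elements of $\{x_0', x_0''\}$ adjacent to $x_i$; writing $\alpha_i = |\{x_0',x_0''\} \cap N(x_i)|$ for this number, we have $\alpha_i \le 2$. For $X_2$, Lemma \ref{lem:a} tells us that $x_i$ has only $11$ neighbors in $X_2$, so the number $\beta_i$ of common neighbors lying in $X_2$ satisfies $\beta_i \le 11$. Since the two sets partition all $12$ common neighbors, we get $\alpha_i + \beta_i = 12$, whence $\alpha_i \ge 12 - 11 = 1$. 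This is precisely the assertion that $x_i$ is adjacent to at least one of $x_0', x_0''$.

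The argument is a single direct double count, so I expect no serious obstacle. The only point requiring care is the complete localization of the common neighbors — in particular, confirming via Lemma \ref{lem:tab1} that $X_3$ contributes nothing, and via Lemma \ref{lem:b} that the sole neighbors of $x_0$ in $X_1$ are $x_0',x_0''$. Once these are in place, the tight bound $\beta_i \le 11$ supplied by Lemma \ref{lem:a} is exactly what forces $\alpha_i \ge 1$, completing the proof for both $i=1$ and $i=2$ by the identical computation.
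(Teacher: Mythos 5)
Your proof is correct and follows essentially the same route as the paper: use Lemma \ref{lem:b} to get $\lambda=12$ common neighbors of $x_0$ and $x_i$, observe via Lemma \ref{lem:a} that at most $11$ of them can lie in $X_2$, and conclude that at least one lies in $\{x_0',x_0''\}$. The only difference is that you spell out the localization of the common neighbors (ruling out $K_4$, $X_0$, and $X_3$ via Lemma \ref{lem:tab1}), which the paper leaves implicit.
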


\begin{proof}
By Lemma \ref{lem:a}, the vertex $x_i$ has $11$ neighbors in $X_2$. Since $x_i$ is adjacent to $x_0$, by Lemma \ref{lem:b}, it has $12$ common neighbors with $x_0$. Thus it must be adjacent to at least one of $x_0', x_0''$.
\end{proof}

Let $X_2^{-0}$ be the set of vertices in $X_2$ that are not adjacent to $x_0$. Notice that by Lemma \ref{lem:b}, $|X_2^{-0}|=54-36=18$.

\begin{lemma}
At most one vertex from $X_2^{-0}$ is adjacent to $x_1$, and at most one is adjacent to $x_2$.
\end{lemma}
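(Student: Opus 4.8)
The plan is to apply strong regularity to the adjacent pair $x_0, x_1$ and to carefully account for the location of their common neighbors. By Lemma \ref{lem:b} the vertices $x_0$ and $x_1$ are adjacent, so they have exactly $\lambda = 12$ common neighbors. First I would argue that none of these common neighbors lie in $V(K_4) \cup X_0 \cup X_3$: the vertex $x_0$ has no neighbor in $K_4$ (it lies in $X_0$) and none in $X_0$ (which is the singleton $\{x_0\}$), while in $X_3 = \{x_1, x_2\}$ the only candidate is $x_2$, which is adjacent to $x_0$ but not to $x_1$ by Lemma \ref{lem:tab1}. Hence all $12$ common neighbors of $x_0$ and $x_1$ lie in $X_1 \cup X_2$.

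Next I would split this count according to the two parts. The only neighbors of $x_0$ in $X_1$ are $x_0'$ and $x_0''$ by Lemma \ref{lem:b}; let $a_1 \in \{1,2\}$ denote how many of them are adjacent to $x_1$, where $a_1 \geq 1$ by Lemma \ref{lem:d}. The remaining $12 - a_1$ common neighbors then lie in $X_2$ and are, by definition, neighbors of $x_1$ in $X_2$ that are also adjacent to $x_0$, hence outside $X_2^{-0}$. Since Lemma \ref{lem:a} gives $x_1$ exactly $11$ neighbors in $X_2$, the number of these lying in $X_2^{-0}$ equals
$$11 - (12 - a_1) = a_1 - 1 \leq 1.$$
The identical argument applied to $x_2$ yields the second claim.

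The computation itself is routine; the only point requiring care — and the step I would treat as the crux — is the exhaustive verification that no common neighbor of $x_0$ and $x_1$ escapes into $V(K_4)$, $X_0$, or $X_3$, since otherwise the split of the $12$ common neighbors into an $X_1$-part and an $X_2$-part would fail and the bound would no longer be forced. This is precisely where Lemmas \ref{lem:b} and \ref{lem:tab1} do the essential work. I would also note that the argument in fact pins down the count exactly, giving $a_1 - 1$ neighbors of $x_i$ in $X_2^{-0}$, which may be convenient for the subsequent analysis.
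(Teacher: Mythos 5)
Your proof is correct and takes essentially the same approach as the paper's: both apply strong regularity to the adjacent pair $x_0,x_1$ (giving $\lambda=12$ common neighbors), observe that at most $2$ of them (namely $x_0',x_0''$) can lie in $X_1$, and conclude that at least $10$ of the $11$ neighbors of $x_1$ in $X_2$ (Lemma \ref{lem:a}) must be adjacent to $x_0$, hence at most one lies in $X_2^{-0}$. Your write-up is simply more explicit — ruling out common neighbors in $V(K_4)$, $X_0$, $X_3$ via Lemma \ref{lem:tab1}, and recording the exact count $a_1-1$ — where the paper leaves these checks implicit.
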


\begin{proof}
Vertex $x_1$ shares at most 2 common neighbors with $x_0$ in $X_1$ (possibly $x_0'$ or $x_0''$). Thus it must have at least 10 out of 11 neighbors (Lemma \ref{lem:a}) in $X_2$ adjacent to $x_0$. By symmetry, the same claim holds for $x_2$.
\end{proof}

\begin{lemma}
Each vertex in $X_2^{-0}$ that is not adjacent to any of the $\{x_1,x_2\}$,  has degree $t \leq 2$ in $X[X_2^{-0}]$, and it has precisely $t$ neighbors in $\{x_0',x_0''\}$. Vertices (at most two) in $X_2^{-0}$ that are adjacent to exactly one of the vertices in $\{x_1,x_2\}$ have degree $t-1$ in  $X_2^{-0}$ and $t\geq 1$ neighbors in $\{x_0',x_0''\}$. If there exists a vertex in $X_2^{-0}$ that is adjacent to both $x_1$ and $x_2$, then it is adjacent to both $x_0'$ and $x_0''$. In particular, such a vertex has degree 0 in $X[X_2^{-0}]$.
\end{lemma}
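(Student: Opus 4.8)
The plan is to fix an arbitrary vertex $v \in X_2^{-0}$ and to extract a single clean identity for its degree in $X[X_2^{-0}]$. Write $a \in \{0,1,2\}$ for the number of neighbors of $v$ among $\{x_1,x_2\}$, write $s \in \{0,1,2\}$ for the number of neighbors of $v$ among $\{x_0',x_0''\}$, and write $d$ for the degree of $v$ in $X[X_2^{-0}]$. The goal is to prove $d = s - a$; all three assertions of the lemma then fall out immediately from the fact that $d$ is a non-negative integer and $s \le 2$.

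First I would exploit strong regularity. Since $v \in X_2$ is not adjacent to $x_0$, the pair $v,x_0$ has exactly $\mu = 20$ common neighbors. By Lemma \ref{lem:b} the neighborhood of $x_0$ consists precisely of $x_1,x_2$, the two vertices $x_0',x_0''$ in $X_1$, and the $36$ vertices of $X_2 \setminus X_2^{-0}$. Hence every common neighbor of $v$ and $x_0$ lies in one of these three blocks, and counting the ones adjacent to $v$ gives $a + s + p = 20$, where $p$ denotes the number of neighbors of $v$ in $X_2 \setminus X_2^{-0}$.

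Next I would count the $2$-paths from $v$ to $K_4$. As $v$ is adjacent to exactly two vertices of $K_4$, there are $2\cdot 12 + 2\cdot 20 = 64$ such paths. Sorting $v$'s neighbors by the block containing them — two in $K_4$ (each with $3$ neighbors in $K_4$), $a$ in $X_3$ (each with $3$), $q$ in $X_1$ (each with $1$), and its $p+d$ neighbors in $X_2$ (each with $2$) — yields $64 = 6 + 3a + q + 2(p+d)$. Eliminating $q$ through the degree equation $40 = 2 + a + q + p + d$ gives $p + d = 20 - 2a$, and subtracting the common-neighbor relation $p = 20 - a - s$ produces the desired identity $d = s - a$.

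Finally I would read off the three cases. If $a = 0$ then $d = s \le 2$, which is the first assertion with $t = d$. If $a = 1$ then $d = s - 1$, and $d \ge 0$ forces $s \ge 1$, which is the second assertion with $t = s$; the bound that at most two such vertices occur is exactly the preceding lemma. If $a = 2$ then $d = s - 2 \ge 0$ forces $s = 2$ and $d = 0$, which is the third assertion. I do not anticipate a serious obstacle: the only care needed is the correct bookkeeping of which blocks can contain common neighbors of $v$ and $x_0$ and $2$-paths to $K_4$, for which the structural input of Lemma \ref{lem:b} is essential.
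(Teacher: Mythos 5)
Your proposal is correct and follows essentially the same route as the paper's proof: strong regularity gives the common-neighbor count with $x_0$ (using Lemma \ref{lem:b} to locate $N(x_0)$), and double counting $2$-paths from $v$ to $K_4$ gives the degree of $v$ in $X_2$. The only difference is presentational — the paper runs the two counts separately in each of the three cases $a=0,1,2$, while you package them into the single identity $d = s - a$ and read off the cases at the end.
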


\begin{proof}
Notice that any $v\in X_2^{-0}$ must have 20 common neighbors with $x_0$. First, assume it is not adjacent to $x_1$ or $x_2$. Then their common neighbors can only be in $\{x_0',x_0''\}$, say $t$ of them, and in $X_2\backslash X_2^{-0}$. By double counting 2-paths from $v$ to $K_4$ we obtain that $v$ has 20 neighbors in $X_2$. Thus, precisely $t \leq 2$ of them must be in $X_2^{-0}$.

Second, assume that $v$ is adjacent to exactly one of the $x_1,x_2$. Then it has $20-1-t$ neighbors in $X_2\backslash X_2^{-0}$. On the other hand, by double counting, its degree in $X_2$ is $18$. Thus it's degree in $X_2^{-0}$ is $18-(19-t)=t-1$.

Finally, if $v$ is adjacent to $x_1$ and $x_2$, it has degree $16$ in $X_2$, thus all his neighbors have to be in $X_2\backslash X_2^{-0}$ and it also has to be adjacent to both vertices in $\{x_0',x_0''\}$. 
\end{proof}

\begin{lemma}
Each of the vertices $x_0',x_0''$ has $19-t$ neighbors in $X_2^{-0}$, where $t\in \{1,2\}$ is the number of its neighbors in $\{x_1,x_2\}$. Moreover, $x_0'$ and $x_0''$ are not adjacent.
\end{lemma}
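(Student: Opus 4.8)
The plan is to pin down the number of neighbors of $x_0'$ in $X_2$ by a $2$-path count, and then locate how many of those lie in $X_2^{-0}$ using the adjacency $x_0\sim x_0'$ (recall $x_0',x_0''$ are by definition the neighbors of $x_0$ in $X_1$). Writing $t$ for the number of neighbors of $x_0'$ in $\{x_1,x_2\}$, I would first count walks of length two from $x_0'$ to $K_4$, exactly as in the earlier lemmas. Since $x_0'\in X_1$ it meets $K_4$ in a single vertex, so this count equals $12+3\cdot 20=72$. Expressing the same quantity as $\sum_{w\in N(x_0')}|N(w)\cap V(K_4)|$, the contributions are $3,0,1,2,3$ for a neighbor lying in $K_4$, in $X_0$, in $X_1$, in $X_2$, and in $\{x_1,x_2\}$ respectively; using that $x_0'$ has exactly one neighbor in $K_4$ and exactly one in $X_0$ (namely $x_0$), together with $40$-regularity, yields a linear relation that pins the number of neighbors of $x_0'$ in $X_2$ at $31-2t$.

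Next I would use that $x_0\sim x_0'$ forces $|N(x_0)\cap N(x_0')|=\lambda=12$. By Lemma~\ref{lem:b} the neighborhood of $x_0$ is $\{x_1,x_2,x_0',x_0''\}$ together with the $36$ vertices of $X_2\setminus X_2^{-0}$, so these $12$ common neighbors can only consist of the $t$ vertices of $\{x_1,x_2\}$ adjacent to $x_0'$, of $x_0''$ in case $x_0'\sim x_0''$, and of some vertices of $X_2\setminus X_2^{-0}$. This identifies the number of neighbors of $x_0'$ in $X_2\setminus X_2^{-0}$, and subtracting from $31-2t$ shows that $x_0'$ has exactly $19-t+\epsilon$ neighbors in $X_2^{-0}$, where $\epsilon=1$ if $x_0'\sim x_0''$ and $\epsilon=0$ otherwise. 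The analogous computation for $x_0''$ gives $19-s+\epsilon$ with the same $\epsilon$, where $s$ is the number of neighbors of $x_0''$ in $\{x_1,x_2\}$.

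The crux is therefore to show $\epsilon=0$, which simultaneously delivers the claimed count $19-t$ and the non-adjacency of $x_0'$ and $x_0''$. I would argue by contradiction: if $x_0'\sim x_0''$, then $x_0'$ has $20-t$ neighbors in $X_2^{-0}$ and $x_0''$ has $20-s$. Since neither count can exceed $|X_2^{-0}|=18$, we get $t=s=2$, so both $x_0'$ and $x_0''$ are adjacent to every one of the $18$ vertices of $X_2^{-0}$. But then all $18$ of these vertices are common neighbors of the adjacent pair $x_0',x_0''$, contradicting $\lambda=12$. This settles $\epsilon=0$, whence $x_0'$ has $19-t$ neighbors in $X_2^{-0}$; finally $19-t\le 18$ forces $t\ge 1$, so $t\in\{1,2\}$. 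The only delicate point is recognizing that it is the size constraint $|X_2^{-0}|=18$ that couples the two assertions, since the $2$-path count and the $\lambda$-count alone leave $\epsilon$ undetermined.
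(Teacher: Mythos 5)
Your proposal is correct and follows essentially the same route as the paper's proof: a $2$-path count giving $31-2t$ neighbors of $x_0'$ in $X_2$, then the $\lambda=12$ count with $x_0$ locating $12-t-\epsilon$ of them in $X_2\setminus X_2^{-0}$ and hence $19-t+\epsilon$ in $X_2^{-0}$, and finally the size bound $|X_2^{-0}|=18$ combined with $\lambda=12$ on the pair $x_0',x_0''$ to force $\epsilon=0$. The paper phrases the last step as "$x_0'$ and $x_0''$ would have at least $16$ common neighbors in $X_2^{-0}$," while you sharpen it to all $18$; both exceed $\lambda=12$, so the arguments coincide.
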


\begin{proof}
By double counting $2$-paths from $x_0'$ to $K_4$ we have that $x_0$ has $31-2t$ neighbors in $X_2$.  Vertices $x_0$ and $x_0'$ have $12$ common neighbors. Let $s$ be equal to 1 if  $x_0'$ and $x_0''$ are adjacent and $0$ otherwise. Vertices $x_0$ and $x_0'$ must have $12-t-s$ common neighbors in $X_2$, thus $x_0'$ has $31-2t-(12-t-s)=19-t+s$ neighbors in $X_2^{-0}$. By symmetry the same holds for  $x_0''$ and since $|X_2^{-0}|=18$ it would imply that $x_0'$ and $x_0''$ have at least $16$ common neighbors in $X_2^{-0}$. Thus they are not adjacent and $s=0$. The lemma holds.
\end{proof}

The above lemmas give enough structure to be able to generate all graphs induced by $K_4\cup \{x_0,x_0',x_0'',x_1,x_2\}\cup X_{2}^{-0}$. 

\begin{prop} \label{prop:case134542}
There are $46$ graphs of the form  $K_4\cup \{x_0,x_0',x_0'',x_1,x_2\}\cup X_{2}^{-0}$ satisfying the structure described in this section.
\end{prop}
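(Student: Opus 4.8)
The plan is to convert every structural constraint established in this subsection into an exhaustive backtracking search over the few remaining degrees of freedom and to verify by computer (the program listed in Table~\ref{table:sageprog}) that exactly $46$ non-isomorphic graphs survive. First I would lay down the part of the graph that is already forced: the clique $K_4$; the vertex $x_0$, joined to $x_1,x_2,x_0',x_0''$ and to no vertex of $K_4\cup X_2^{-0}$; the non-edges $x_1x_2$ (Lemma~\ref{lem:tab1}) and $x_0'x_0''$; and the prescribed numbers of neighbours of $x_1,x_2$ (three each) and of $x_0',x_0''$ (one each) in $K_4$. The only freedom among $\{x_1,x_2,x_0',x_0''\}$ is then the bipartite adjacency between $\{x_1,x_2\}$ and $\{x_0',x_0''\}$, which by Lemma~\ref{lem:d} and the lemma computing the neighbourhoods of $x_0',x_0''$ in $X_2^{-0}$ has no isolated vertex on either side; only a handful of such patterns exist, and each fixes the parameter $t\in\{1,2\}$ for $x_0'$ and for $x_0''$.

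Next I would exploit the fact that all adjacencies into $X_2^{-0}$ are almost completely determined. By the last lemma each of $x_0',x_0''$ misses at most one vertex of $X_2^{-0}$, since it has $19-t\in\{17,18\}$ neighbours among the $18$ vertices, while the lemma bounding the neighbours of $x_1$ and of $x_2$ gives each of them at most one neighbour there. Hence for $v\in X_2^{-0}$ the number $t(v)$ of its neighbours in $\{x_0',x_0''\}$ equals $2$ for all but at most two vertices, and the degree rule for vertices of $X_2^{-0}$ then forces $X[X_2^{-0}]$ to be $2$-regular apart from a bounded number of exceptional vertices, that is, a disjoint union of cycles together with a few short paths or isolated vertices arising from the at most two vertices excluded by $x_0',x_0''$ and the at most two adjacent to $x_1$ or $x_2$. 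I would enumerate these graphs by running over the admissible cycle- and path-decompositions of the $18$ vertices and inserting the few exceptional vertices in all compatible positions.

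Once the graph on $\{x_0,x_0',x_0'',x_1,x_2\}\cup X_2^{-0}$ is fixed, the remaining unknown is, for each $v\in X_2^{-0}$, which of the $\binom{4}{2}$ pairs of $K_4$-vertices it is joined to; this is where the search would otherwise explode, and it is the main obstacle. I would control it using strong regularity: every placed vertex must share $\lambda=12$ common neighbours with each of its neighbours in $K_4$ and $\mu=20$ with each non-neighbour, and every pair of already-placed vertices must exhibit the correct count $12$ or $20$ inside the subgraph built so far. Propagating these equalities prunes the $K_4$-attachments drastically, after which I would filter the surviving candidates by the interlacing criterion of \cite{self-cite}, discarding any graph that cannot be induced in a graph with spectrum $40^{1},2^{75},(-10)^{19}$. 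Reducing the resulting list modulo isomorphism, the computation reports precisely $46$ graphs, which proves the proposition.
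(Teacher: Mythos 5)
Your overall strategy coincides with the paper's: the paper offers no hand proof of this proposition at all, only the Sage program \texttt{K4/134542/Case1.sage} from Table~\ref{table:sageprog}, which enumerates all graphs consistent with the lemmas of the subsection, exactly as you set out to do. Your reconstruction of the forced structure is also correct: the non-edges $x_1x_2$ and $x_0'x_0''$, the bipartite graph between $\{x_1,x_2\}$ and $\{x_0',x_0''\}$ with no isolated vertex, each of $x_0',x_0''$ missing at most one vertex of $X_2^{-0}$, each of $x_1,x_2$ having at most one neighbour there, and $X[X_2^{-0}]$ being a union of cycles and a bounded number of paths or isolated vertices.

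However, the pruning rule you propose for the $K_4$-attachments is unsound and would wreck the count. Strong regularity fixes common-neighbour counts \emph{in $X$}: adjacent vertices have $12$ common neighbours and non-adjacent ones have $20$ \emph{somewhere in the $95$-vertex graph}, not inside the $27$-vertex subgraph you are building. Inside an induced subgraph the only valid consequences are the inequalities ``at most $12$'' and ``at most $20$''; requiring the exact counts $12$ or $20$ ``inside the subgraph built so far'' and ``propagating these equalities'' would reject essentially every candidate. For instance, two adjacent vertices of $X_2^{-0}$ can have only a handful of common neighbours within the subgraph (at most $2$ on $K_4$, at most $2$ among $\{x_0',x_0''\}$, none at $x_0$, and very few elsewhere since $X[X_2^{-0}]$ has maximum degree $2$), so your equality test would output $0$ graphs rather than $46$. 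The exact counts were already ``spent'' in deriving the structural lemmas of the subsection, which the paper obtained by counting $2$-paths and common neighbours in the \emph{full} graph $X$; the legitimate computational filter on top of those lemmas consists of the upper-bound inequalities, the interlacing criterion, and isomorphism reduction. With that correction your enumeration becomes the paper's, but as written the central pruning step fails.
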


\subsection{Case $(2,31,57,1)$}

Let $X_0 = \{x_0,x_1\}, X_1, X_2, X_3 = \{x_3\}$ be the sets of vertices having $0,1,2$, and $3$ neighbors in $K_4$ respectively.

\begin{lemma} 
We have $x_0 \not \sim x_1$.
\end{lemma}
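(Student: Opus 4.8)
The plan is to derive a contradiction from the assumption $x_0 \sim x_1$ by the same double-counting of $2$-paths to $K_4$ that drives the earlier lemmas of this section. Since $x_0 \in X_0$ has no neighbour in $K_4$, strong regularity with $\mu = 20$ forces $|N(x_0) \cap N(y)| = 20$ for every $y \in V(K_4)$, so there are exactly $4 \cdot 20 = 80$ paths of length two from $x_0$ to $K_4$. The only possible neighbour of $x_0$ inside $X_0$ is $x_1$, so I would record four quantities: the numbers $a$ and $c$ of neighbours of $x_0$ in $X_1$ and $X_2$, together with the indicators $\epsilon_1, \epsilon_3 \in \{0,1\}$ of adjacency to $x_1$ and to the unique vertex $x_3 \in X_3$.

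These four quantities satisfy two linear relations. Counting the $2$-paths according to the number of $K_4$-neighbours of each neighbour of $x_0$ — a neighbour lying in $X_i$ sits on exactly $i$ such paths — gives $a + 2c + 3\epsilon_3 = 80$, while the degree of $x_0$ gives $a + c + \epsilon_1 + \epsilon_3 = 40$. Subtracting the second relation from the first isolates the key identity $c = 40 + \epsilon_1 - 2\epsilon_3$.

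I then impose $\epsilon_1 = 1$ and examine the two values of $\epsilon_3$. If $\epsilon_3 = 0$ then $c = 41$, exceeding $\deg(x_0) = 40$, which is impossible; if $\epsilon_3 = 1$ then $c = 39$ and the degree equation returns $a = 40 - c - \epsilon_1 - \epsilon_3 = -1 < 0$, again impossible. Hence $x_0 \not\sim x_1$. The argument is symmetric in $x_0$ and $x_1$, so nothing extra is required for the other vertex. I do not expect a genuine obstacle here: the only point needing care is the bookkeeping — that the contributions to the $2$-path count are precisely $0,1,2,3$ across $X_0, X_1, X_2, X_3$ and that $X_0 = \{x_0,x_1\}$ contains no further vertex (immediate from $b_0 = 2$). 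In contrast to the analogous Lemma~\ref{lem:tab1} in the case $(1,34,54,2)$, no interlacing test or computer check should be needed, since the degree bound alone is already tight enough to forbid the extra edge.
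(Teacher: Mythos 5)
Your proof is correct and uses essentially the same argument as the paper: double counting the $4\cdot 20 = 80$ required $2$-paths from $x_0$ to $K_4$ and showing that an edge to $x_1$ (which contributes nothing to this count) makes the total unattainable. The paper phrases it as a one-line upper bound ($3 + 2\cdot 38 = 79 < 80$) rather than solving the two linear relations case by case, but the underlying counting is identical.
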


\begin{proof}
 If $x_0 \sim x_1$ then the number of $2$-paths from $x_0$ to $K_4$ is at most $3+2\cdot 38=79$. But since $x_0$ is not adjacent to any vertex of $K_4$, it should have precisely $4\cdot 20$ $2$-paths to it.
\end{proof}

\begin{lemma} 
$x_0 \sim x_3$ and $x_1 \sim x_3$. 
\end{lemma}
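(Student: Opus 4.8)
The plan is to pin down the adjacencies of the two vertices of $X_0$ to $x_3$ by the same $2$-path counting used throughout this section, and then to extract a contradiction from the condition $\mu = 20$ whenever one of them fails to be adjacent to $x_3$.

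First I would count the $2$-paths from a fixed vertex $x_i \in X_0$ to $K_4$. Since $x_i$ has no neighbor in $K_4$, each of its four vertices contributes $20$ common neighbors, so there are exactly $4 \cdot 20 = 80$ such paths. Writing $t$, $s$ and $k$ for the number of neighbors of $x_i$ in $X_1$, $X_2$ and $X_3$ respectively (here $k \in \{0,1\}$ because $|X_3| = 1$), and recalling that $x_i$ has no neighbor in $X_0$ since $x_0 \not\sim x_1$, the path count reads
$$ t + 2s + 3k = 80, \qquad t + s + k = 40. $$
Subtracting twice the second identity from the first gives $k = t$. Hence if $x_i \not\sim x_3$ then $t = 0$ and all $40$ neighbors of $x_i$ lie in $X_2$, whereas if $x_i \sim x_3$ then $t = 1$ and $x_i$ has $38$ neighbors in $X_2$.

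The contradiction then comes from comparing the neighborhoods of the nonadjacent pair $x_0, x_1$, which must share exactly $20$ common neighbors. Suppose $x_0 \not\sim x_3$, so that $N(x_0) \subseteq X_2$ with $|N(x_0)| = 40$. If also $x_1 \not\sim x_3$, then likewise $N(x_1) \subseteq X_2$ with $|N(x_1)| = 40$, and since $|X_2| = 57$ inclusion--exclusion forces $|N(x_0) \cap N(x_1)| \geq 40 + 40 - 57 = 23 > 20$. If instead $x_1 \sim x_3$, then $x_1$ has $38$ neighbors in $X_2$; as every common neighbor of $x_0$ and $x_1$ must lie in $X_2$, the same estimate gives $|N(x_0) \cap N(x_1)| \geq 40 + 38 - 57 = 21 > 20$. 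In either sub-case $\mu = 20$ is violated, so $x_0 \sim x_3$, and by the symmetry between $x_0$ and $x_1$ also $x_1 \sim x_3$.

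I expect the only delicate point to be checking that the inclusion--exclusion bound \emph{strictly} exceeds $20$ in both sub-cases; the slack is narrow ($23$ and $21$ against $20$), yet it is precisely what drives both vertices of $X_0$ into the neighborhood of $x_3$. Everything else is the routine $2$-path bookkeeping already in force in this section.
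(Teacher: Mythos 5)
Your proof is correct and follows essentially the same route as the paper: both count $2$-paths to $K_4$ to show that non-adjacency to $x_3$ forces all $40$ neighbors of a vertex of $X_0$ into $X_2$, and then apply inclusion--exclusion inside $X_2$ (of size $57$) to the non-adjacent pair $x_0, x_1$. The only cosmetic difference is that the paper states the contradiction as $|N_0 \cup N_1| \geq 58 > 57$, while you state the equivalent fact that the intersection exceeds $\mu = 20$.
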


\begin{proof}
  Suppose $x_0$ is not adjacent to $x_3$ and let $N_0,N_1$, respectively, be the sets of neighbors of $x_0,x_1$ in $X_2$. By double counting 2-paths to $K_4$ from $x_0$ and $x_1$ we have $|N_0| = 40$ 
  while $|N_1|= 40-2t$ where $t \in \{0,1\}$ depending on whether $x_1$ is adjacent to $x_3$ or not. Since all the neighbors of $x_0$ are in $X_2$, we have $|N_0 \cap N_1| = 20$. But this implies $|N_0 \cup N_1| = 60-2t \geq 58$ which is not possible as $X_2$ has cardinality 57.
\end{proof}

\begin{lemma} 
Vertices $x_0$ and $x_1$ have precisely one neighbor in $X_1$. In particular, the two neighbors are distinct. Moreover, both $x_0$ and $x_1$ have 38 neighbors in $X_2$, 19 of them in common.
\end{lemma}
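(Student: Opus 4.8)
The plan is to reuse the double-counting of length-$2$ paths to $K_4$ that drives the whole section. First I would apply it to $x_0$. Since $x_0 \in X_0$ has no neighbor in $K_4$, strong regularity forces exactly $\mu = 20$ common neighbors with each of the four vertices of $K_4$, so there are $4 \cdot 20 = 80$ paths of length $2$ from $x_0$ to $K_4$. Using the preceding lemmas, $x_0$ is adjacent to $x_3 \in X_3$ but not to $x_1$, so its $40$ neighbors split as $1$ in $X_3$, $0$ in $X_0$, say $a$ in $X_1$, and $39 - a$ in $X_2$. A neighbor in $X_i$ contributes $i$ such paths, so the count reads $3 + a + 2(39 - a) = 80$, which forces $a = 1$; hence $x_0$ has exactly one neighbor in $X_1$ and $38$ neighbors in $X_2$. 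The identical computation applied to $x_1$ gives the same conclusion.

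For the remaining two claims I would invoke strong regularity on the non-adjacent pair $x_0, x_1$, which must therefore share exactly $\mu = 20$ common neighbors, and then locate where these can sit. The vertex $x_3$ is adjacent to both and is one of them; no common neighbor lies in $X_0$; at most one lies in $X_1$, occurring precisely when the two (unique) $X_1$-neighbors coincide; all the rest lie in $X_2$. Since each of $x_0, x_1$ has $38$ neighbors in $X_2$ and $|X_2| = 57$, inclusion--exclusion gives at least $38 + 38 - 57 = 19$ common neighbors in $X_2$.

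Combining the two counts finishes the argument. If the $X_1$-neighbors of $x_0$ and $x_1$ were equal, the common-neighbor tally would be at least $1 + 1 + 19 = 21$, exceeding $\mu = 20$; so the two neighbors must be distinct and contribute nothing in $X_1$. The balance $20 = 1 + 0 + (\text{common neighbors in } X_2)$ then pins the number of common neighbors in $X_2$ to exactly $19$. I do not expect a genuine obstacle here: the lemma is essentially forced once the earlier structural lemmas are in hand. The only point requiring care is the bookkeeping of where the common neighbors may lie, and in particular noticing that the size bound $|X_2| = 57$ together with the $\mu = 20$ constraint is exactly tight, leaving no slack.
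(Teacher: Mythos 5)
Your proposal is correct and follows essentially the same route as the paper: the same double count of $2$-paths to $K_4$ (the identical equation $3 + t + 2(40-1-t) = 80$), the same inclusion--exclusion bound $38+38-57=19$ in $X_2$, and the same use of $\mu=20$ together with the common neighbor $x_3$ to force distinct $X_1$-neighbors and exactly $19$ common neighbors in $X_2$.
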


\begin{proof}
 Let $t$ be the number of neighbors of $x_0$ in $X_1$. By counting $2$-paths from $x_0$ to $K_4$ we have $$ 20 \cdot 4 = 3 + t + 2 \cdot (40-1-t)\,,$$ which gives that $t = 1$, and $x_0$ has 38 neighbors in $X_2$. Same holds for $x_1$. Let again $N_0,N_1$ be the sets of neighbors of $x_0,x_1$ in $X_2$. Then $|N_0 \cup N_1|\leq 57$, thus $|N_0 \cap N_1|\geq 19$. Since $x_0$ and $x_1$ have a common neighbor $x_3$, $|N_0 \cap N_1|= 19$ and $|N_0 \cup N_1| = 57$. In particular this implies that $x_0$ and $x_1$ cannot have a common neighbor in $X_1$.
\end{proof}

The last two lemmas now imply that $X_2$ can be partitioned into sets $X_2^0, X_2^{\{0,1\}}, X_2^{1}$ each of cardinality $19$ such that every vertex in $X_2^{i}$ is adjacent to $x_i$ and not adjacent to $x_{1-i}$ for $i = 0,1$ and every vertex in $X_2^{0,1}$ is adjacent to both $x_0$ and $x_1$.

Let us denote the neighbors of $x_0,x_1$ in $X_1$ by $x_0'$ and $x_1'$ respectively.

\begin{lemma}\label{lem:e}
If $x_3$ is adjacent to $x_1'$ then it has 1 neighbor in $X_2^0$, otherwise it has no neighbor in $X_2^0$.
\end{lemma}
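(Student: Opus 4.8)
The plan is to pin down, by a short sequence of double-counting arguments, exactly how many neighbors $x_3$ has in each of the three blocks $X_2^0$, $X_2^{\{0,1\}}$, $X_2^1$ of the partition of $X_2$, together with whether $x_3$ is adjacent to $x_0'$ and to $x_1'$. Write $a$, $b$, $c$ for the number of neighbors of $x_3$ in $X_2^0$, $X_2^{\{0,1\}}$, $X_2^1$, respectively, and let $\epsilon_0,\epsilon_1\in\{0,1\}$ be the indicators of the adjacencies $x_3\sim x_0'$ and $x_3\sim x_1'$. The statement to be proved is then precisely that $a=\epsilon_1$.

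First I would set up the two common-neighbor equations. Recall from the preceding lemmas that $x_0\sim x_3$ and that the neighborhood of $x_0$ consists of $x_3$, the single vertex $x_0'\in X_1$, and the $38$ vertices $X_2^0\cup X_2^{\{0,1\}}$. Since $x_0$ and $x_3$ are adjacent they have exactly $\lambda=12$ common neighbors, and every such common neighbor lies in $N(x_0)\setminus\{x_3\}$; counting those that are also adjacent to $x_3$ localizes them to $\{x_0'\}\cup X_2^0\cup X_2^{\{0,1\}}$ and yields $\epsilon_0+a+b=12$. The identical argument applied to the pair $x_1,x_3$, whose neighborhood intersects $X_2$ in $X_2^1\cup X_2^{\{0,1\}}$, gives $\epsilon_1+b+c=12$.

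Next I would determine the total number of neighbors of $x_3$ in $X_2$. Since $x_3\in X_3$ it is adjacent to three vertices of $K_4$ and non-adjacent to the fourth, so the number of $2$-paths from $x_3$ to $K_4$ equals $3\cdot 12+20=56$. Counting the same paths through their middle vertex, the three $K_4$-neighbors of $x_3$ contribute $3\cdot 3$ edges back into $K_4$, the $X_1$-neighbors contribute one each, the $X_2$-neighbors contribute two each, and $x_0,x_1\in X_0$ contribute none. Combining this count with the degree equation for $x_3$ (degree $40$, namely $3$ neighbors in $K_4$, the two vertices $x_0,x_1$, and the remainder split between $X_1$ and $X_2$) eliminates the number of $X_1$-neighbors and forces $a+b+c=12$.

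Finally, solving the resulting linear system
$$\epsilon_0+a+b=12,\qquad \epsilon_1+b+c=12,\qquad a+b+c=12$$
gives $a=\epsilon_1$ (and symmetrically $c=\epsilon_0$, $b=12-\epsilon_0-\epsilon_1$), which is exactly the claim. The step I expect to require the most care, and thus the main obstacle, is the bookkeeping in the $2$-path count to $K_4$: one must correctly record that each of the three $K_4$-neighbors of $x_3$ sends three edges back into $K_4$, and that $x_0,x_1$, lying in $X_0$, send none, since an off-by-a-constant slip there would corrupt the value $a+b+c=12$ on which the whole conclusion rests.
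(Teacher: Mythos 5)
Your proof is correct and follows essentially the same route as the paper: a $2$-path count to $K_4$ showing $x_3$ has exactly $12$ neighbors in $X_2$, combined with the common-neighbor count $\epsilon_1+b+c=12$ for the adjacent pair $x_1,x_3$, which together force $a=\epsilon_1$. Your first equation $\epsilon_0+a+b=12$ (via $x_0$) is redundant for the stated claim—it only adds the bonus conclusion $c=\epsilon_0$—but it is valid and harmless.
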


\begin{proof}
Vertex $x_3$ has 12 neighbors in $X_2$. On the other hand, it must have 12 common neighbors with $x_1$. Notice that the common neighbors can only be in $X_2 \cup \{x_1'\}$. If $x_3$ is adjacent to $x_1'$, then it must have 11 neighbors in $X_2^{0,1} \cup X_2^1$, thus 1 in $X_2^0$. On the other hand, if $x_3$ is not adjacent to $x_1'$, then it must have all 12 neighbors in $X_2^{0,1} \cup X_2^1$ and no neighbor in $X_2^0$.
\end{proof}

\begin{lemma}
Each vertex in $X[X_2^0]$ has maximal degree 2. If $v \in X_2^0$ has degree 2, then it is not adjacent to $x_3$, but it is adjacent with $x_1'$. If it has degree 1, then it is adjacent with ether both $x_3$ and $x_1'$ or none of them. Finally, if $v$ has degree 0, then it is adjacent to $x_3$ and not adjacent with $x_1$.
\end{lemma}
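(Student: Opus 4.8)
The plan is to fix a vertex $v \in X_2^0$ and pin down its adjacencies using the two counting identities that recur throughout this section: the double count of $2$-paths from $v$ to $K_4$, and the $\mu$-condition applied to the non-neighbour $x_1$. Write $r = 1$ if $v \sim x_3$ and $r = 0$ otherwise, and $s = 1$ if $v \sim x_1'$ and $s = 0$ otherwise. Since $v \in X_2$ it has exactly two neighbours in $K_4$, so there are $2\cdot 12 + 2\cdot 20 = 64$ $2$-paths from $v$ to $K_4$. Counting these by their middle vertex (each neighbour of $v$ in $X_1$ contributes $1$, each neighbour in $X_2$ contributes $2$, the vertex $x_3$ contributes $3$ when $v\sim x_3$, the two neighbours of $v$ inside $K_4$ contribute $3$ each, and $x_0$ contributes $0$) and combining with $\deg v = 40$, I expect to obtain that $v$ has exactly $21 - 2r$ neighbours in $X_2$, i.e. in $X_2^0 \cup X_2^{\{0,1\}} \cup X_2^1$.

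Next I would invoke the partition of $X_2$ established above together with the neighbourhood of $x_1$. From the structure lemmas one reads off $N(x_1) = \{x_3, x_1'\} \cup X_2^1 \cup X_2^{\{0,1\}}$. Because $v \in X_2^0$ is not adjacent to $x_1$, strong regularity forces $|N(v)\cap N(x_1)| = \mu = 20$. The common neighbours can only be $x_3$ (present iff $r=1$), $x_1'$ (present iff $s=1$), and the neighbours of $v$ inside $X_2^1$ and $X_2^{\{0,1\}}$, which gives a clean linear relation among these counts. (One could run the analogous $\lambda$-count against $x_0$ as a cross-check, but it is not needed.)

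Finally, writing $a, b, c$ for the numbers of neighbours of $v$ in $X_2^0, X_2^{\{0,1\}}, X_2^1$ respectively, the first step gives $a + b + c = 21 - 2r$ and the second gives $r + s + b + c = 20$. Subtracting eliminates $b$ and $c$ and leaves
\begin{equation*}
a = \deg_{X[X_2^0]}(v) = 1 - r + s.
\end{equation*}
Since $r, s \in \{0,1\}$ this already yields $a \le 2$, the maximal degree bound, and enumerating the four choices of $(r,s)$ reproduces the lemma exactly: $a = 2$ forces $(r,s)=(0,1)$ (not adjacent to $x_3$, adjacent to $x_1'$); $a = 0$ forces $(r,s)=(1,0)$ (adjacent to $x_3$, not to $x_1'$); and $a = 1$ occurs precisely in the two mixed cases $(r,s) \in \{(0,0),(1,1)\}$. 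The only genuine care required is the bookkeeping: describing $N(x_1)$ correctly through the partition and ensuring that each of $x_0, x_3, x_0', x_1'$ is charged to exactly one place so that no common neighbour is double counted. I expect that verification to be the main, though modest, obstacle.
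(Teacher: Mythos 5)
Your proposal is correct and follows essentially the same argument as the paper: a double count of $2$-paths from $v$ to $K_4$ giving $21-2t$ neighbours in $X_2$, followed by the $\mu=20$ condition applied to the non-neighbour $x_1$ (whose neighbourhood is $\{x_3,x_1'\}\cup X_2^{\{0,1\}}\cup X_2^1$), and subtraction to get $\deg_{X[X_2^0]}(v)=1-t+s$. The only differences are notational (your $r$ is the paper's $t$, and you split the $X_2$-neighbours into $a,b,c$ rather than subtracting directly), so there is nothing to add.
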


\begin{proof}
Pick a vertex $v\in X_2^0$ and let $s\in \{0,1\}$ indicate whether $v$ is adjacent with $x_1'$. Similarly, let $t\in \{0,1\}$ indicate if $v$ it is adjacent with $x_3$. Let $r$ be the number of neighbors of $v$ in $X_2$. We count the number of 2-paths from $v$ to $K_4$:
$$2\cdot 12 +2 \cdot 20 = 2\cdot 3 + 3t + 2r + (40-2-t-r-1)\,. $$
Thus $r=21-2t$. Vertex $v$ and $x_1$ must have $20$ common neighbors. This implies that the number of neighbors of $v$ in $X_2^{0,1} \cup X_2^1$ is $20-t-s$. We have that $v$ has $(21-2t)-(20-t-s)=1-t+s$ neighbors in $X_2^0$ which proves our lemma.
\end{proof}

By generating all possible graphs of the form  $X_2^0\cup \{x_0,x_1,x_1',x_3\} \cup K_4$ we infer

\begin{prop} \label{prop:case231571}
None of the possible graphs on $X_2^0\cup \{x_0,x_1,x_1',x_3\} \cup K_4$ interlaces $X$.
\end{prop}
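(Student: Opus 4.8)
The plan is to establish this proposition in the same computational spirit as the analogous Proposition~\ref{prop:case134542}: the structural lemmas of this subsection pin down the induced subgraph on the $27$ vertices $X_2^0\cup\{x_0,x_1,x_1',x_3\}\cup K_4$ up to a finite and, as it turns out, small collection of candidates, and every one of them can be tested against the eigenvalue interlacing criterion of \cite{self-cite}. First I would fix the forced part of the configuration: $K_4$ is a clique, $x_3$ is joined to three of its vertices, $x_1'$ to exactly one, and every vertex of $X_2^0$ to exactly two of them; among the remaining vertices the forced edges are $x_1\sim x_1'$, $x_0\sim x_3$ and $x_1\sim x_3$ together with $x_0\sim v$ for every $v\in X_2^0$, while $x_0\not\sim x_1$, $x_1\not\sim v$ for $v\in X_2^0$, and $x_0\not\sim x_1'$ are forced non-edges (the last because the unique $X_1$-neighbour $x_0'$ of $x_0$ lies outside this vertex set and differs from $x_1'$). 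What remains free is (i)~whether $x_3\sim x_1'$, (ii)~the internal structure of $X[X_2^0]$, which by the last lemma is a disjoint union of paths and cycles since every vertex there has degree at most $2$, and (iii)~the edges from $x_3$ and $x_1'$ into $X_2^0$. These choices are coupled: Lemma~\ref{lem:e} ties the number of neighbours of $x_3$ in $X_2^0$ to whether $x_3\sim x_1'$, and the last lemma forces, vertex by vertex, the degree of each $v\in X_2^0$ inside $X[X_2^0]$ in terms of its adjacencies to $x_3$ and $x_1'$. I would encode exactly these constraints and enumerate all resulting graphs, reducing modulo isomorphism to keep the list short.

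For the test I would use that $X$ has spectrum $40^{1},2^{75},(-10)^{19}$. By Cauchy interlacing any induced subgraph $H$ on $27$ vertices must have eigenvalue $2$ with multiplicity at least $27-(95-75)=7$, must have least eigenvalue at least $-10$, and may have at most one eigenvalue exceeding $2$; equivalently, the ordered spectrum of $H$ must interlace that of $X$. For each generated candidate I would compute its spectrum and check these inequalities. The expected, and claimed, outcome is that every candidate fails one of them---most often because $2$ fails to be an eigenvalue of the required multiplicity $7$---so that no candidate interlaces $X$.

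The genuinely delicate part is not the linear algebra but the bookkeeping that guarantees the enumeration is both complete and correctly constrained: each structural lemma of this subsection must be translated faithfully into a generation rule, the case split of Lemma~\ref{lem:e} must be carried through, and the degree conditions inside $X[X_2^0]$ must be matched against the adjacencies to $\{x_3,x_1'\}$ without silently discarding an admissible configuration. Provided the constraints are transcribed correctly, the enumeration terminates with a small explicit list, and running the interlacing test (by the Sage program recorded in Table~\ref{table:sageprog}) on every member confirms that none of them interlaces $X$. Hence no admissible induced subgraph of this shape can occur, which is the statement of the proposition and rules out the case $(2,31,57,1)$.
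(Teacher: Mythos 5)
Your proposal is correct and is essentially the paper's own proof: the paper likewise takes the forced structure from the subsection's lemmas (the forced edges/non-edges you list, the free choices governed by Lemma~\ref{lem:e} and the degree lemma for $X[X_2^0]$), generates all admissible graphs on these $27$ vertices with a Sage program (recorded in Table~\ref{table:sageprog}), and checks that none interlaces the spectrum $40^{1},2^{75},(-10)^{19}$ of $X$. Your specialization of Cauchy interlacing (eigenvalue $2$ with multiplicity at least $7$, least eigenvalue at least $-10$, at most one eigenvalue exceeding $2$) is exactly the right criterion for a $27$-vertex induced subgraph.
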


\subsection{Case $(0,37,51,3)$}

Let again $X_1,X_2$ and $X_3 = \{x_0,x_1,x_2\}$ be the respective subsets of vertices of $X$ with 1,2,3 neighbors in $K_4$. There are three non-isomorphic ways to introduce $3$ vertices to $K_4$ by joining each vertex to 3 vertices of $K_4$. Each such graph $G_1,G_2,G_3$ can be uniquely described by a tuple $\vec{n} = (n_1,n_2,n_3,n_4)$ counting the number of edges from the $i'$th vertex of $K_4$ to $X_3$. By relabeling the vertices of $K_4$ if needed we obtain three tuples $(0,3,3,3), (1,3,3,2)$ and $(2,2,2,3)$ which we will cover as subcases. In what follows we first establish certain structural claims about the configuration $(0,37,51,3)$.

\begin{lemma}  \label{lem:tab4}
The vertices of $X_3$ form an independent set.
\end{lemma}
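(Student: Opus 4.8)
The plan is to show that the three vertices in $X_3=\{x_0,x_1,x_2\}$ are pairwise non-adjacent. The natural tool, used repeatedly in the earlier cases, is double counting of $2$-paths to $K_4$ combined with the strong regularity constraint $\lambda=12$. Each vertex of $X_3$ has exactly three neighbors in $K_4$, so for any two distinct $x_i,x_j\in X_3$ the number of common neighbors they must have is dictated by strong regularity: $12$ if $x_i\sim x_j$ and $20$ if $x_i\not\sim x_j$. The strategy is to assume some pair, say $x_0\sim x_1$, and derive a contradiction by showing that the number of common neighbors forced by their overlap inside $K_4$, together with the available room outside $K_4$, cannot reconcile with the required value $12$.

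First I would compute, for each vertex $x_i$, how many $2$-paths it sends into $K_4$. Since $x_i$ has three neighbors in $K_4$ and $\lambda=12$, each of those three neighbors contributes $12$ common neighbors, while the single non-neighbor in $K_4$ contributes $\mu=20$; this pins down the number of neighbors of $x_i$ lying in $X_1,X_2,X_3$ via a counting identity analogous to those in the $(2,31,57,1)$ and $(1,34,54,2)$ cases. The key observation is that two adjacent vertices of $X_3$ already share at least two common neighbors inside $K_4$ itself (if their neighborhoods in $K_4$ are two $3$-subsets of a $4$-set, they overlap in at least $3+3-4=2$ vertices of $K_4$). This immediately eats into the budget of $12$ required common neighbors, but more importantly it constrains how their remaining neighborhoods interact outside $K_4$.

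Concretely, I would argue as in Lemma~\ref{lem:tab1} and Lemma~\ref{lem:b}: suppose $x_0\sim x_1$ and count the induced structure on $K_4\cup\{x_0,x_1\}$. There are only a few non-isomorphic ways (up to relabeling) to attach two adjacent vertices each joined to three vertices of $K_4$, and for each of them the overlap of the two neighborhoods in $K_4$ is forced. Combining this overlap with the requirement that $x_0$ and $x_1$ share exactly $12$ neighbors total, and that each $x_i$ distributes its remaining neighbors among $X_1,X_2$ and the one other vertex of $X_3$, yields a system of counting equations. I would then check whether this system admits a nonnegative integer solution consistent with $|X_1|=37$, $|X_2|=51$; I expect it does not, or else the resulting small induced subgraph on $K_4\cup\{x_0,x_1,x_2\}$ fails to interlace $X$, mirroring the computational check invoked in Lemma~\ref{lem:tab1}.

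The main obstacle I anticipate is that, unlike the single adjacency in Case $(1,34,54,2)$, here three vertices interact simultaneously, so a purely local two-vertex count may not be decisive and one may need to consider the global constraint that all three pairwise overlaps (each either $12$ or $20$) fit together with the fixed sizes of $X_1,X_2,X_3$. If the elementary double-counting argument leaves a gap, the fallback is to enumerate the finitely many induced configurations on $K_4\cup X_3$ subject to the degree constraints and rule them out by the interlacing criterion via a short Sage computation, exactly as was done for the adjacency lemmas in the preceding subsections.
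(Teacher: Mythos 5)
Your fallback is precisely the paper's proof: the paper disposes of this lemma purely computationally, by enumerating all ways of introducing edges among the three vertices of $X_3$ in $K_4\cup X_3$ and checking, via the Sage program listed in Table~\ref{table:sageprog}, that none of the resulting graphs interlaces $X$. So the proposal, read as a whole, is correct and in the worst case coincides with the paper. What is worth pointing out is that the counting route you sketched but hedged on (``I expect it does not'') actually closes in two lines, giving a computer-free proof that the paper does not have. Count $2$-paths from $x_i\in X_3$ to $K_4$ through vertices outside $K_4$: each of the three $K_4$-neighbors of $x_i$ shares $2$ common neighbors with $x_i$ inside $K_4$, hence $12-2=10$ outside, while the unique non-neighbor shares $3$ inside, hence $20-3=17$ outside, for a total of $3\cdot 10+17=47$. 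If $x_i$ has $a$, $b$, $c$ neighbors in $X_1$, $X_2$, $X_3$ respectively (recall $X_0=\emptyset$ in this case), then $a+b+c=37$ and $a+2b+3c=47$, whence $a=27+c$. Now if two vertices of $X_3$ were adjacent, each would have $c\geq 1$ and hence at least $28$ neighbors in $X_1$; since $|X_1|=37$, inclusion-exclusion forces them to share at least $28+28-37=19$ common neighbors in $X_1$ alone, contradicting $\lambda=12$. This is exactly the argument pattern the paper itself uses in Lemma~\ref{lem:ab} for non-adjacent pairs of $X_3$, so your instinct was right; the only thing missing from your write-up was pushing the inequality through instead of deferring to the interlacing computation.
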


\begin{proof} 
No matter how we introduce edges among the vertices of $X_3$ in the graph $K_4 \cup X_3$ we do not obtain a graph interlacing $X$. 
\end{proof}

\begin{lemma}\label{lem:aa} 
Every vertex $x\in \{x_0,x_1,x_2\}$ has 27 neighbors in $X_1$ and 10 neighbors in $X_2$. 
\end{lemma}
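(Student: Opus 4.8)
The plan is to apply the same double-counting of $2$-paths to $K_4$ that has driven every lemma in this section, now anchored at a single vertex $x \in X_3$.

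First I would fix $x \in \{x_0,x_1,x_2\}$ and count the $2$-paths from $x$ to $K_4$ in two ways. Since $x$ lies in $X_3$ it is adjacent to exactly three vertices of $K_4$ and non-adjacent to the remaining one; by strong regularity it shares $\lambda = 12$ common neighbors with each of the three neighbors and $\mu = 20$ with the single non-neighbor, giving $3\cdot 12 + 20 = 56$ such $2$-paths in total.

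Second, I would recount the same quantity as $\sum_{y \sim x} |N(y) \cap V(K_4)|$, splitting $N(x)$ according to where the neighbors sit. The three neighbors of $x$ lying inside $K_4$ each have three neighbors in $K_4$ (it is a clique), contributing $9$; writing $a$ and $b$ for the number of neighbors of $x$ in $X_1$ and $X_2$, these contribute $a$ and $2b$ respectively. The decisive input is Lemma \ref{lem:tab4}: because $X_3$ is independent, $x$ has no neighbor in $X_3$, so its $40-3 = 37$ neighbors outside $K_4$ all lie in $X_1 \cup X_2$, i.e.\ $a+b = 37$. Equating the two counts yields $9 + a + 2b = 56$, and together with $a+b=37$ this linear system solves to $b = 10$ and $a = 27$, as claimed.

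The argument reduces to a two-equation linear system, so I do not anticipate a genuine obstacle; the only point demanding care is the bookkeeping in the second count. Specifically, one must remember that the three neighbors of $x$ that themselves belong to $K_4$ still contribute to the $2$-path sum, and that invoking the independence of $X_3$ supplied by Lemma \ref{lem:tab4} is precisely what eliminates any neighbor of $x$ inside $X_3$ and thereby fixes $a+b = 37$. Missing either of these would throw the constant off and spoil the two solutions $a=27$, $b=10$.
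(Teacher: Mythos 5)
Your proof is correct and is essentially identical to the paper's: both double-count $2$-paths from $x$ to $K_4$, getting $3\cdot 12 + 20 = 56$ on one side and $3\cdot 3 + a + 2b$ on the other, with $a+b=37$ forced by the independence of $X_3$ (Lemma \ref{lem:tab4}), which the paper uses implicitly when writing $l = 40-3-k$. Your bookkeeping and the resulting linear system match the paper's computation exactly.
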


\begin{proof} Let $x$ have $k$ neighbors in $X_1$ and $l=40-3-k$ neighbors in $X_2$. We count the number of paths of length 2 from $x$ to $K_4$. We have $3\cdot 12+20=3\cdot 3+k+2 \cdot (40-k-3)$ and thus $k=27$ and $l=10$.
\end{proof}

Let $X_2^0,X_2^1, X_2^3$ be the neighbors in $X_2$ of $x_0,x_1,x_2$ respectively. We have proved that  $|X_2^0| = |X_2^1| = |X_2^2| = 10$. Notice that the sets $X_2^0,X_2^1,X_2^2$ need not be disjoint.

\begin{lemma}\label{lem:ab} 
It holds $ |X_2^0 \cap X_2^1 |, | X_2^0 \cap X_2^2|,| X_2^1 \cap X_2^2|  \in \{0,1\}$.
\end{lemma}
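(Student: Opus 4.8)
The plan is to bound the number of common neighbours of two vertices $x_i, x_j \in X_3$ by locating them within the vertex partition $V(K_4) \cup X_1 \cup X_2 \cup X_3$ and isolating the $X_2$-contribution. Since $X_3$ is an independent set by Lemma~\ref{lem:tab4}, the vertices $x_i$ and $x_j$ are non-adjacent, so strong regularity forces them to have exactly $\mu = 20$ common neighbours; moreover, none of these common neighbours lies in $X_3$, again because $X_3$ is independent.

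First I would count the common neighbours in $V(K_4)$. Each vertex of $X_3$ is adjacent to exactly $3$ of the $4$ vertices of $K_4$, so
\[
|N(x_i) \cap N(x_j) \cap V(K_4)| \geq 3 + 3 - 4 = 2 \,.
\]
Next I would use Lemma~\ref{lem:aa}, which gives each of $x_i$ and $x_j$ precisely $27$ neighbours in $X_1$; since $|X_1| = 37$, inclusion–exclusion yields
\[
|N(x_i) \cap N(x_j) \cap X_1| \geq 27 + 27 - 37 = 17 \,.
\]

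Combining these with the count of common neighbours in $X_2$, which is exactly $|X_2^i \cap X_2^j|$, and recalling that the $X_3$-contribution vanishes, I obtain
\[
20 = |N(x_i)\cap N(x_j)\cap V(K_4)| + |N(x_i)\cap N(x_j)\cap X_1| + |X_2^i \cap X_2^j| \geq 2 + 17 + |X_2^i \cap X_2^j| \,,
\]
so $|X_2^i \cap X_2^j| \leq 1$. Together with the trivial lower bound this gives $|X_2^i \cap X_2^j| \in \{0,1\}$, as claimed. I do not expect a genuine obstacle here; the only point needing care is verifying that every common neighbour is captured by exactly one part of the partition, which is immediate because $X_0 = \emptyset$ in this case and hence $V(X) = V(K_4) \cup X_1 \cup X_2 \cup X_3$. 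Note also that the bound of $2$ in $V(K_4)$ holds uniformly, independently of which of the three later subcases occurs, so the argument is valid as stated before the subcase analysis.
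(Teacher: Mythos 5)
Your proposal is correct and follows essentially the same argument as the paper: non-adjacency of the $X_3$ vertices (via Lemma~\ref{lem:tab4}) gives $20$ common neighbours, at least $2$ of which lie on $K_4$ and at least $17$ in $X_1$ by Lemma~\ref{lem:aa} and inclusion--exclusion, leaving at most $1$ in $X_2$. Your write-up merely makes explicit the details the paper leaves implicit (the role of the independence of $X_3$ and the fact that $X_0 = \emptyset$), which is fine.
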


\begin{proof}
Vertices $x_0$ and $x_1$ have 20 common neighbors, at least 2 of them are on $K_4$. Each of $x_0,x_1$ has $27$ neighbors in $X_1$, where $|X_1|=37$. Thus they must have at least 17 common neighbors in $X_1$. The latter implies that they have at most one common neighbor in $X_2$. By symmetry the claim now holds for $| X_2^0 \cap X_2^2|$ and $| X_2^1 \cap X_2^2|$.
\end{proof}

\begin{lemma}\label{lem:notiang} 
For $i \in \{1,2\}$ the graph induced by $X_2^0 \setminus X_2^i$ is triangle-free. 
\end{lemma}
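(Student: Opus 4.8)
The plan is to argue by contradiction and reduce to a small induced subgraph that cannot sit inside $X$. Suppose, for $i=1$ say, that three vertices $u,v,w\in X_2^0\setminus X_2^1$ form a triangle. Since all three lie in $X_2^0$ they are adjacent to $x_0$, so $\{x_0,u,v,w\}$ is a $4$-clique; and since $u,v,w\in X_2^0\setminus X_2^1$ none of them is adjacent to $x_1$, while by Lemma \ref{lem:tab4} we also have $x_0\not\sim x_1$. Hence $x_1$ is non-adjacent to every vertex of the clique $\{x_0,u,v,w\}$. Adding the data already in hand — $K_4$ is complete, each of $u,v,w$ has exactly two neighbours in $K_4$ (they lie in $X_2$) and each of $x_0,x_1$ has exactly three (they lie in $X_3$) — determines the induced subgraph on the nine vertices $K_4\cup\{x_0,x_1,u,v,w\}$ up to the choice of these $K_4$-adjacency patterns.

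First I would reduce to a finite check. Acting by the symmetry group of $K_4$, the admissible configurations (which three vertices of $K_4$ are seen by $x_0$ and by $x_1$, and which two by each of $u,v,w$) fall into a small number of isomorphism types. For each type I would form the $9\times 9$ adjacency matrix and apply the interlacing criterion against the spectrum $40^1,2^{75},(-10)^{19}$ of $X$; the decisive inequality is $\lambda_2(H)\le 2$, which a dense nine-vertex graph built from two heavily joined $4$-cliques typically violates. Carrying this out (one of the Sage routines of Table \ref{table:sageprog}) should show that no admissible configuration interlaces $X$, so no such triangle exists. The case $i=2$ is identical after exchanging the roles of $x_1$ and $x_2$.

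It is worth stressing why $x_1$ must be retained. The induced subgraph on $\{x_0,u,v,w\}$ together with an isolated $x_1$ is merely $K_4\cup K_1$, whose second eigenvalue is $0$, so it interlaces $X$ harmlessly; likewise $X_2^0$ itself need not be triangle-free, which is exactly why the statement asserts triangle-freeness only of $X_2^0\setminus X_2^i$ — the at most one vertex of $X_2^0\cap X_2^1$ allowed by Lemma \ref{lem:ab} is the one permitted to sit in a triangle. The force of the lemma comes from combining the new clique $\{x_0,u,v,w\}$ with the original $K_4$ and the common non-neighbour $x_1$, which together make the local graph dense enough to fail interlacing; a naive double count on a single edge does not obviously close, which is what steers the proof toward the spectral test.

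The main obstacle I anticipate is not any individual inequality but the completeness and correctness of the finite analysis: I must enumerate every way of attaching $x_0,x_1,u,v,w$ to $K_4$ that respects the forced neighbour counts $3,3,2,2,2$ and the non-adjacencies, be sure no isomorphism type is overlooked, and confirm that the interlacing test fails in each one — in particular that it is genuinely the presence of $x_1$, not the triangle alone, that produces the violation. A secondary point to verify is that restricting to these nine vertices loses nothing, i.e.\ that the adjacencies listed above are all the hypotheses guarantee and that the remaining freedom is exhausted by the enumeration.
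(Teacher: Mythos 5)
Your reduction is set up correctly (the forced adjacencies and the observation that for a nine-vertex subgraph the only non-trivial interlacing constraint is $\lambda_2(H)\le 2$ are both right), but the pivotal claim that every admissible attachment pattern violates this constraint is \emph{false}, so the proof does not close. Take $x_0\sim k_1,k_2,k_3$, $x_1\sim k_1,k_2,k_3$, $u\sim k_1,k_2$, $v\sim k_2,k_3$, $w\sim k_1,k_3$, together with all forced edges ($K_4$ complete, $\{u,v,w\}$ a triangle, $x_0\sim u,v,w$, and $x_1$ non-adjacent to $x_0,u,v,w$). Here $\{u,v,w,k_1,k_2,k_3\}$ induces the octahedron $K_{2,2,2}$, $x_0$ is joined to all six of its vertices, and $k_4,x_1$ are non-adjacent twins joined exactly to $\{k_1,k_2,k_3\}$. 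The partition $\{u,v,w\},\{k_1,k_2,k_3\},\{x_0\},\{k_4,x_1\}$ is equitable with quotient characteristic polynomial $p(\lambda)=\lambda^4-4\lambda^3-12\lambda^2+12\lambda+18$, and the five non-quotient eigenvalues are $0,0,0,-2,-2$. Since $p(-3)>0$, $p(-2)<0$, $p(0)>0$, $p(2)<0$, $p(6)>0$, the four roots of $p$ lie one each in $(-3,-2)$, $(-2,0)$, $(0,2)$, $(2,6)$; hence exactly one eigenvalue of $H$ exceeds $2$ (it is $\approx 5.66$, with $\lambda_2(H)\approx 1.52$), and all eigenvalues exceed $-10$. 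So this configuration \emph{does} interlace $X$, and your finite check would come back inconclusive. The intuition you rely on is in fact backwards: heavily joined cliques push $\lambda_2$ \emph{down} (toward complete multipartite behaviour), and it is disjoint or sparsely joined cliques that have large $\lambda_2$; adding $x_1$ contributes nothing here, as it ends up a twin of $k_4$ with local eigenvalue $0$. Consistently with all this, Lemma \ref{lem:notiang} does not appear among the computational claims of Table \ref{table:sageprog}.

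The paper's proof is structural, not spectral, and this is the idea your attempt is missing. Given a triangle $T$ in $X_2^0\setminus X_2^i$, the set $T\cup\{x_0\}$ is a $4$-clique on which, exactly as you observe, $x_i$ has no neighbour. If that $4$-clique lies in no $5$-clique, then its vector $\vec b$ has $b_0\ge 1$, so it falls under one of the cases $(3,28,60,0)$, $(1,34,54,2)$, $(2,31,57,1)$, all of which are eliminated elsewhere in the paper; if it does lie in a $5$-clique, then $x_i$ has at most one neighbour on that $5$-clique, contradicting the consequence of the formula of \cite{bono} that every vertex outside a $5$-clique of $X$ has precisely two neighbours on it. This argument uses global information about $X$ (the exhaustive elimination of all $4$-cliques admitting a non-neighbour, which itself requires the star-complement and comparability-graph machinery), and that is precisely what a local nine-vertex interlacing test cannot substitute for.
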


\begin{proof}
Assume that there exists a triangle in $X_2^0 \setminus X_2^i$. Together with $x_0$ it forms a $K_4$. Assume this $K_4$ does not extend to a $K_5$. Vertex $x_i$ is not adjacent to any of the vertices of this $K_4$. Thus we have a case of $K_4$ with some vertices that are not adjacent to it. We have already shown that this is not possible. On the other hand, if $K_4$ extends to a $K_5$, we have a $K_5$ and a vertex that is adjacent to at most one vertex on it. This is impossible, since if $X$ has a $5$-clique $K_5$ then using the formula of \cite{bono} for $K_5$, as we did for $K_4$, gives us that every vertex in $V(X) \setminus K_5$ has precisely two neighbors in $K_5$.
\end{proof}

Let $X_1^{-0}$ denote the subgraph of $X_1$ induced on all the vertices not adjacent to $x_0$ and let $X_1^{i}$ be the set of vertices in $X_1$ adjacent to $x_i$ for $i\in \{0,1,2\}$.

\begin{lemma}\label{lem:ac} 
The graph $X_1^{-0}$ has 10 vertices. At most one of the vertices in $X_1^{-0}$ is not adjacent to $x_1$ and at most one is not adjacent to $x_2$. Moreover, the graph $X_1^{-0}\setminus X_1^i$, for $i\in \{1,2\}$, has no triangles. Each vertex $v\in X_1^{-0}$ has degree $k$ in $X_1^{-0}$ at most 2 and is adjacent to precisely $11-t-m+k$ vertices in $X_2^0$, where $t\in \{0,1,2\}$ is the number of vertices adjacent to $v$ in $\{x_1,x_2\}$ and $m\in \{0,1\}$ the number of common neighbors of $v$ and $x_0$ on $K_4$.  
\end{lemma}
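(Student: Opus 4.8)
The plan is to establish the four assertions of the lemma one at a time, each by a counting argument that reuses the bookkeeping already fixed in this subsection. Throughout I will use that every vertex of $X_1$ has exactly one neighbour on $K_4$, every vertex of $X_2$ exactly two, every vertex of $X_3$ exactly three, and that any two vertices of $X_3$ share at least two neighbours on $K_4$ (two $3$-subsets of a $4$-set meet in at least $2$ points). I will also invoke Lemma~\ref{lem:aa} (each $x_i$ has $27$ neighbours in $X_1$ and $10$ in $X_2$), Lemma~\ref{lem:ab} ($|X_2^0\cap X_2^i|\le 1$) and Lemma~\ref{lem:tab4} ($X_3$ is independent).

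The first three claims are quick. For the cardinality: since $x_0$ has $27$ neighbours in $X_1$ and $|X_1|=37$, exactly $37-27=10$ vertices of $X_1$ avoid $x_0$, so $|X_1^{-0}|=10$. For the ``at most one'' statement, a vertex of $X_1^{-0}$ not adjacent to $x_1$ is precisely a vertex of $X_1$ adjacent to neither $x_0$ nor $x_1$, and inclusion--exclusion shows the number of these equals $b-17$, where $b=|X_1^0\cap X_1^1|$ is the number of common neighbours of $x_0,x_1$ in $X_1$. Writing the $\mu=20$ common neighbours of the non-adjacent pair $x_0,x_1$ as $a$ on $K_4$, $b$ in $X_1$ and $c$ in $X_2$ (none in $X_3$ by independence), I have $a\ge 2$ and $c\le 1$, hence $b=20-a-c\le 18$ and the count $b-17$ is at most $1$; the symmetric claim for $x_2$ is identical. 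The triangle-free assertion is then immediate, since $X_1^{-0}\setminus X_1^{i}$ is exactly the set just counted, so it has at most one vertex and contains no triangle.

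The substantive part is the degree and neighbour count for a fixed $v\in X_1^{-0}$. First I would double count $2$-paths from $v$ to $K_4$: as $v$ has one neighbour on $K_4$, this number is $12+3\cdot 20=72$. Summing instead over the middle vertex $w\in N(v)$ weighted by $|N(w)\cap K_4|$, and splitting $N(v)$ into its single $K_4$-neighbour (weight $3$), the $t$ neighbours in $\{x_1,x_2\}$ (weight $3$ each), the $d_1$ neighbours in $X_1$ (weight $1$) and the $d_2$ neighbours in $X_2$ (weight $2$), yields $3+3t+d_1+2d_2=72$. Together with the degree equation $1+t+d_1+d_2=40$ this forces $d_2=30-2t$ and $d_1=9+t$. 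Next, since $v\not\sim x_0$ the pair $v,x_0$ has $20$ common neighbours, lying as $m$ on $K_4$, none in $X_3$, $e_1$ in $X_1$, and $|N(v)\cap X_2^0|$ in $X_2$. The $X_1$-neighbours of $v$ split into those adjacent to $x_0$ ($e_1$ of them) and those not (these are exactly the $k$ neighbours of $v$ inside the induced graph $X_1^{-0}$), so $e_1=d_1-k=9+t-k$. Substituting into $m+e_1+|N(v)\cap X_2^0|=20$ gives $|N(v)\cap X_2^0|=11-t-m+k$, as claimed, and since $X_2^0$ has only $10$ vertices we get $11-t-m+k\le 10$, i.e. $k\le t+m-1\le 2$.

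I do not anticipate any genuine obstacle: each step is forced once the $K_4$-weights are identified. The one place to be careful is the bookkeeping of \emph{where} the common neighbours of the relevant non-adjacent pairs can sit---in particular that $X_3$ contributes nothing by independence, and that $v$ reaches $X_2^0$ only through its $X_2$-neighbours that also see $x_0$---and making sure the unique $K_4$-neighbour of $v$ is assigned weight $3$ rather than dropped.
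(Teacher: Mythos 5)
Your proof is correct for the statement as written, and on three of its four claims it is essentially the paper's own argument: the same use of Lemma~\ref{lem:aa} to get $|X_1^{-0}|=37-27=10$, the same accounting of the $20$ common neighbours of the non-adjacent pair $x_0,x_i$ to get $|X_1^0\cap X_1^i|\le 18$ and hence the ``at most one'' claims, and the same two counts for the last claim ($2$-paths to $K_4$ forcing $9+t$ neighbours in $X_1$, then the $20$ common neighbours of $v$ and $x_0$ forcing $l=11-t-m+k\le 10$, whence $k\le 2$). The one divergence is the triangle-free claim. You dispatch it as vacuous, observing that $X_1^{-0}\setminus X_1^i$ is exactly the set just shown to have at most one vertex; given the literal definitions ($X_1^{-0}$ being the non-neighbours of $x_0$ in $X_1$), that deduction is valid. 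The paper, however, proves it by a genuinely structural argument: a triangle in the set together with $x_0$ would form a $K_4$ to which $x_i$ has no neighbours, which is impossible because the earlier cases exclude a $K_4$ (outside any $K_5$) having a non-neighbour, and every vertex off a $K_5$ has exactly two neighbours on it (as in Lemma~\ref{lem:notiang}). That argument requires the triangle's vertices to be \emph{adjacent} to $x_0$, which vertices of $X_1^{-0}$ are not; so the paper's proof is really about a different set (triangles among vertices adjacent to one of $x_0,x_1,x_2$ and avoiding another, e.g.\ $X_1^0\setminus X_1^i$, or symmetrically $X_1^{-0}\cap X_1^i$), and the lemma's statement appears to carry a notational slip. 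Be aware that your vacuity observation, while faithful to the statement, does not establish that stronger non-trivial fact; if it is needed (e.g.\ as a constraint in the generation behind Proposition~\ref{prop:case037513}), you would have to add the paper's $K_4$/$K_5$ argument, which your setup already supports.
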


\begin{proof}
By Lemma \ref{lem:aa}, $|X_1^0|=27$, thus $|X_1^{-0}|=10$. For $i\in \{1,2\}$, $x_i$ and $x_0$ share at least 2 neighbors on $K_4$. Since they are non-adjacent, they share 20 neighbors, thus at most 18 in $X_1$. This implies $|X_1^0\cap X_1^i|\leq 18$, thus  $|X_1^0\cup X_1^i| \geq 27+27-18=36$. Since $|X_1|=37$ we see that there exist at most one vertex in $X_1$ that is not adjacent to $x_0$ and not to $x_i$.

If there is a triangle in the graph  $X_1^{-0}\setminus X_1^i$, this triangle forms a $K_4$ with $x_0$, while $x_i$ is not adjacent to any of its vertices. If this $4$-clique is not a part of a $5$-clique the assertion follows since this case has already been dealt with (a $K_4$ with some vertices not adjacent to it). On the other hand, if this $K_4$ is a part of a $K_5$, we have an induced subgraph of $K_5$ together with a vertex that is adjacent to one or none of the vertices on $K_5$. But every vertex in $V(X) \setminus V(K_5)$ has two neighbors in $V(K_5)$ as in Lemma \ref{lem:notiang}. A contradiction. Hence $X_1^{0,2}$ is indeed triangle-free. 

Let now $v\in X_1^{-0}$ be as in the lemma. Denote with $j$ the number of its neighbors in $X_1$. By counting 2-paths to $K_4$ we get
$$12+3\cdot 20= 3+3t+j+2(40-1-t-j),$$
hence $j=9+t$. Denote with $l$ the number of neighbors of $v$ in $X_2^0$. Vertex $v$ and $x_0$ have 20 common neighbors, thus
$$20=(j-k)+l+m=(9+t-k)+l+m,$$
from which we get $11-t-m+k=l\leq 10$. This implies also that $k\leq 2$.
\end{proof}

\begin{lemma}\label{lem:notiang2} 
Let $v$ be a vertex in $X_2^0$ and $k$ the number of its neighbors in $X_2^0$, $t\in\{0,1,2\}$ the number of its neighbors in $\{x_1,x_2\}$, and $m\in\{1,2\}$ the number of common neighbors of $v$ and $x_0$ on $K_4$. Then:
$$k+m+t\leq 3.$$
In particular $k\leq 2$.
\end{lemma}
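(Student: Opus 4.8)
The plan is to pin down the neighbourhood distribution of $v$ by the same double-counting of $2$-paths to $K_4$ used throughout this section, and then to squeeze the bound out of a count of the common neighbours of $v$ and $x_0$. First I would record that $v\in X_2^0$ means $v$ has exactly two neighbours on $K_4$ and $v\sim x_0$. Counting $2$-paths from $v$ to $K_4$ gives $2\cdot 12+2\cdot 20 = 6 + 3 + 3t + a + 2b$, where $a,b$ are the numbers of neighbours of $v$ in $X_1,X_2$; the term $6$ comes from the two neighbours of $v$ on $K_4$, the term $3$ from $x_0$, and $3t$ from the $t$ neighbours of $v$ in $\{x_1,x_2\}$. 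Combined with the degree equation $a+b = 37-t$ this forces $a = 19+t$ and $b = 18-2t$; in particular $v$ has $19+t$ neighbours in $X_1$.

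The decisive step is to compare $v$ and $x_0$ inside $X_1$. By Lemma \ref{lem:aa} the vertex $x_0$ has $27$ neighbours in $X_1$, i.e. $|X_1^0| = 27$, while $|X_1| = 37$. Since both $N(v)\cap X_1$ and $X_1^0 = N(x_0)\cap X_1$ live inside a set of size $37$, inclusion--exclusion forces them to overlap in at least $(19+t)+27-37 = 9+t$ vertices. Hence $v$ and $x_0$ already share at least $9+t$ common neighbours in $X_1$ alone.

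To finish I would use that $v\sim x_0$, so they have exactly $\lambda = 12$ common neighbours in all of $X$. Because $X_3$ is independent (Lemma \ref{lem:tab4}), $x_0$ has no neighbour among $\{x_1,x_2\}$, so every common neighbour of $v$ and $x_0$ sits on $K_4$, in $X_1$, or in $X_2$; these contribute $m$, at least $9+t$, and $k$ respectively, the $X_2$-contribution being exactly the neighbours of $v$ in $X_2^0$. Therefore $m + (9+t) + k \le 12$, that is $k+m+t\le 3$. Since $v$ has two neighbours on $K_4$ and $x_0$ has three, they always share at least one neighbour on $K_4$, so $m\ge 1$ and thus $k\le 2$.

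The only genuine obstacle is spotting where the binding constraint lives: the naive estimate $m+k\le 12$ coming straight from the common-neighbour count is hopelessly weak. The real leverage is that $X_1$ is small ($37$ vertices) while $v$ and $x_0$ each already occupy a large fraction of it, so inclusion--exclusion pushes the bulk of their twelve shared neighbours into $X_1$, leaving room for at most $3-t$ of them on $K_4\cup X_2$. Everything else is the routine $2$-path bookkeeping standard in this section.
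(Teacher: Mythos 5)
Your proof is correct and follows essentially the same route as the paper: double-count $2$-paths from $v$ to $K_4$ to get $|N(v)\cap X_1| = 19+t$, then account for the $\lambda=12$ common neighbours of $v$ and $x_0$ by location ($K_4$, $X_1$, $X_2^0$, and none in $X_3$ by independence). Your inclusion--exclusion bound of $9+t$ common neighbours in $X_1$ is the same constraint the paper phrases as $l = 7+k+m+t \le |X_1^{-0}| = 10$, since $|X_1^{-0}| = 37-27$.
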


\begin{proof}
Let $v\in X_2^0$. Denote  with $j$ the number of neighbors of $v$ in $X_1$. By counting 2-paths from $v$ to $K_4$ we get:
$$2\cdot 12+2\cdot 20 = 2\cdot 3+1\cdot 3+ 3t +j +2(40-2-1-t-j),$$
thus $j=19+t$. Let now $l\leq 10$ be the number of neighbors of $v$ in $X_1^{-0}$. Vertices $v$ and $x_0$ have 12 common neighbors:
$$12=k+m+(j-l)=k+m+(19+t-l),$$
thus $7+k+m+t=l\leq 10$ and $k+m+t\leq 3$. Since $m\in\{1,2\}$, $k\leq 2$.
\end{proof}

By generating all graphs induced on $K_4\cup \{x_0,x_1,x_2\} \cup X_2^0 \cup X_1^{-0}$ we infer.

\begin{prop} \label{prop:case037513}
There are $157$ graphs of the form $K_4\cup \{x_0,x_1,x_2\} \cup X_2^0 \cup X_1^{-0}$ that interlace $X$.
\end{prop}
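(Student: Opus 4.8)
The plan is to establish Proposition~\ref{prop:case037513} as a pruned exhaustive search over all graphs on the $27$-vertex set $V(K_4)\cup\{x_0,x_1,x_2\}\cup X_2^0\cup X_1^{-0}$ that are compatible with the structural constraints collected in Lemmas~\ref{lem:tab4}--\ref{lem:notiang2}, followed by applying the interlacing test to each survivor and counting those that pass. First I would fix the forced entries of the adjacency matrix: $V(K_4)$ induces a clique, the vertices $x_0,x_1,x_2$ form an independent set by Lemma~\ref{lem:tab4}, the vertex $x_0$ is joined to every vertex of $X_2^0$ and to no vertex of $X_1^{-0}$ by the very definitions of these two sets, and the edges joining $\{x_0,x_1,x_2\}$ to $K_4$ are dictated by the chosen attachment pattern. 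The latter ranges over the three non-isomorphic tuples $(0,3,3,3)$, $(1,3,3,2)$, $(2,2,2,3)$, so the enumeration naturally decomposes according to these three possibilities.

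Next I would translate the remaining lemmas into branching rules that confine the still-free edges. By Lemma~\ref{lem:ab} at most one vertex of $X_2^0$ is adjacent to $x_1$ and at most one is adjacent to $x_2$; dually, by Lemma~\ref{lem:ac} at most one vertex of $X_1^{-0}$ fails to be adjacent to $x_1$ and at most one fails to be adjacent to $x_2$. The internal edges of $X_2^0$ and of $X_1^{-0}$ are tightly restricted: every vertex has degree at most $2$ inside its own part (Lemmas~\ref{lem:notiang2} and~\ref{lem:ac}), while the triangle-free conditions of Lemmas~\ref{lem:notiang} and~\ref{lem:ac} forbid triangles in $X_2^0\setminus X_2^i$ and in $X_1^{-0}\setminus X_1^i$ for $i\in\{1,2\}$. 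Finally, the crossing edges between $X_2^0$ and $X_1^{-0}$ are pinned down by exact degree formulas: a vertex of $X_2^0$ has $7+k+m+t$ neighbours in $X_1^{-0}$ by Lemma~\ref{lem:notiang2}, and a vertex of $X_1^{-0}$ has $11-t-m+k$ neighbours in $X_2^0$ by Lemma~\ref{lem:ac}, where $k,m,t$ are the local quantities defined in those lemmas. These two families of formulas must be globally consistent, since they prescribe the same total number of crossing edges from both sides; this counting check discards most partial configurations very early.

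With the search space trimmed in this way, I would generate all admissible graphs up to isomorphism, compute the spectrum of each, and keep precisely those whose eigenvalues $\mu_1\ge\cdots\ge\mu_{27}$ satisfy the interlacing inequalities $\lambda_{i+68}(X)\le\mu_i\le\lambda_i(X)$ forced by the spectrum $40^{(1)},2^{(75)},(-10)^{(19)}$ of $X$ on a $27$-vertex induced subgraph. Counting the survivors gives the asserted value $157$, and the computation is carried out by the Sage program recorded in Table~\ref{table:sageprog}.

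The step I expect to be the real obstacle is controlling the bipartite part between $X_2^0$ and $X_1^{-0}$: even with the per-vertex degree caps in place, the number of bipartite graphs realizing the prescribed degree data is large, and a naive enumeration would be hopeless. Keeping it tractable relies on propagating the degree formulas as hard constraints during the branching -- fixing the values of $k,m,t$ at each vertex first, verifying the global agreement of the crossing-edge count between the two sides, and only then filling in the individual crossing edges -- combined with aggressive isomorphism rejection to collapse the many symmetric configurations arising from the interchangeable vertices within $X_2^0$ and within $X_1^{-0}$. The soundness of the final count then rests on having faithfully encoded every hypothesis of Lemmas~\ref{lem:tab4}--\ref{lem:notiang2} and on applying the interlacing test to the complete admissible list.
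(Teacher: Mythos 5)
Your proposal matches the paper's approach: the paper likewise establishes this proposition purely computationally, by generating (via the Sage program \texttt{K4/037513/generateFinal.sage} in Table~\ref{table:sageprog}) all graphs on the $27$ vertices consistent with the structural constraints of Lemmas~\ref{lem:tab4}--\ref{lem:notiang2} and filtering by interlacing against the spectrum $40^{(1)},2^{(75)},(-10)^{(19)}$. Your reading of the forced adjacencies, the degree formulas $7+k+m+t$ and $11-t-m+k$ for the crossing edges, and the interlacing inequalities is accurate, so the only part that cannot be checked by hand --- the count of $157$ survivors --- is deferred to the computation, exactly as in the paper.
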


The case analysis carried out in this section resulted in $203$ graphs, $190$ of them having a star complement as an induced subgraph. The remaining $13$ had a subgraph of order $19$ with no $2$ as eigenvalue. Such subgraphs were extended in all possible ways  and a final list of $3225$ star complements was obtained. By computing the respective comparability graphs and their clique numbers we have determined that none of them has clique number larger than $74$. Hence we deduce Proposition \ref{prop:clnum}.

\begin{table}[h]
    \begin{tabular}{| l | l | l |} 
    \hline
    Claim & Program & Output \\ \hline
    Lemma \ref{lem:tab1} &  K4/134542/Claim1.sage & \\
    Proposition \ref{prop:case134542} & K4/134542/Case1.sage & case13452.g6 \\
    Proposition \ref{prop:case231571} & K4/231571/Case2.sage & \\
    Lemma \ref{lem:tab4} & K4/037513/Claim2.sage & \\
    Proposition \ref{prop:case037513} & K4/037513/generateFinal.sage & case037513.g6 \\ 
    Lemma \ref{lem:tab14} & K5/extendTriangle.sage & triangles.g6 \\
    \hline
    \end{tabular}
    \caption{Sage programs}\label{table:sageprog}
\end{table}

\section{Main result}
Let $K_5$ be a $5$-clique of $X$ with vertex set $\{k_1,\ldots,k_5\}$. As already mentioned in Lemma \ref{lem:notiang}, results from \cite{bono} imply that every vertex of $X$ that is not in $K_5$ has precisely two neighbors in $K_5$. For $1\leq i<j\leq 5$, let $X_{i,j}$ be vertices in $V(X)\backslash V(K_5)$ that are adjacent to $k_i$ and $k_j$. Since $k_i$ and $k_j$ are adjacent, they must have 12 common neighbors, 3 of them already on $K_5$. Hence $V(G) \setminus V(K_5)$ is partitioned into 10 sets of $9$ vertices, namely $X_{0,1},X_{0,2},\ldots, X_{4,5}$. In what follows we establish structural results about these partitions.

\begin{lemma}
For any $1 \leq i < j \leq 5$ the graph $X_{i,j}$ is either $\overline{K_9}$ or $K_3 \cup \overline{K_6}$ or $K_3 \cup K_3 \cup \overline{K_3}$ or $K_3 \cup K_3 \cup K_3 $.
\end{lemma}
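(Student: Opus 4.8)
The plan is to show that each induced graph $X_{i,j}$ is a disjoint union of triangles and isolated vertices. Since the opening of this section already establishes $|X_{i,j}| = 9$, the number $t$ of triangles must then satisfy $0 \le t \le 3$, and the four values $t = 0,1,2,3$ produce exactly the four graphs $\overline{K_9}$, $K_3 \cup \overline{K_6}$, $K_3 \cup K_3 \cup \overline{K_3}$ and $K_3 \cup K_3 \cup K_3$ listed in the statement. The whole argument rests on two facts available at this point: every $4$-clique of $X$ lies in a $5$-clique (Proposition \ref{prop:clnum}), and, as noted at the start of this section and used in Lemma \ref{lem:notiang}, every vertex of $X$ outside a fixed $5$-clique has exactly two neighbors in it.

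First I would show that every edge of $X_{i,j}$ lies in a triangle of $X_{i,j}$. Take adjacent $u,v \in X_{i,j}$; then $\{k_i,k_j,u,v\}$ is a $4$-clique, so by Proposition \ref{prop:clnum} it extends to a $5$-clique by some vertex $w$ adjacent to all of $k_i,k_j,u,v$. The two-neighbor property forces $w \notin V(K_5)$: otherwise $w$ would be one of the remaining clique vertices $k_\ell$, and then $u$ would have the three neighbors $k_i,k_j,k_\ell$ on $K_5$, contradicting that $u \in V(X)\setminus V(K_5)$ has exactly two. Being outside $K_5$ and adjacent to $k_i,k_j$, the vertex $w$ has $k_i,k_j$ as its only two neighbors on $K_5$, hence $w \in X_{i,j}$ and $\{u,v,w\}$ is a triangle of $X_{i,j}$.

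Next I would show that each such triangle is an isolated $K_3$-component. The triangle $\{u,v,w\}$ together with $k_i,k_j$ forms a $5$-clique $K$. Any other vertex $z \in X_{i,j}$ lies outside $K$ and already has $k_i,k_j$ as two of its neighbors in $K$; by the two-neighbor property these are its only neighbors in $K$, so $z$ is adjacent to none of $u,v,w$. Thus $\{u,v,w\}$ spans a connected component isomorphic to $K_3$ (in particular distinct triangles are vertex-disjoint). Combined with the first step—which shows that a vertex belonging to no triangle has no incident edge and is therefore isolated—this gives that $X_{i,j}$ is a disjoint union of $t$ triangles and $9 - 3t$ isolated vertices, and the case distinction $t \in \{0,1,2,3\}$ finishes the proof.

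The counting and the final case split are routine; the only point requiring care is the repeated, somewhat subtle use of the two-neighbor property, first to place the completing vertex $w$ inside $X_{i,j}$ rather than on $K_5$, and then, crucially, to force every triangle apart from the rest of $X_{i,j}$. Since this property does all the work, I expect the main obstacle to be nothing more than applying it to the correct $5$-clique at each stage.
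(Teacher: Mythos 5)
Your proposal is correct and follows essentially the same route as the paper: extend each edge of $X_{i,j}$ to a $5$-clique via Proposition \ref{prop:clnum} to get a triangle inside $X_{i,j}$, then apply the two-neighbor property to the $5$-clique $T \cup \{k_i,k_j\}$ to isolate that triangle from the rest of $X_{i,j}$. Your write-up is in fact slightly more careful than the paper's, which dismisses with ``clearly'' the step where you verify that the completing vertex cannot lie on the original $K_5$.
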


\begin{proof}
Assume there exists an edge $e = \{x,y\}$ in the graph $X_{i,j}$. Then the vertices $\{x,y,k_i,k_j\}$ induce a 4-clique. By the result of the previous section, every 4-clique is contained in a  5-clique. Clearly, the additional vertex must be in $X_{i,j}$. Hence we have proved that every edge $e$ in $X_{i,j}$ is contained in a triangle in $X_{i,j}$. Let $T$ be a triangle in $X_{i,j}$ and $v\in X_{i,j}$ a vertex not on $T$. Since $T \cup \{k_i,k_j\}$ induces a 5-clique, every vertex not on this 5-clique is adjacent to exactly 2 vertices on this clique. Since $v$ is adjacent to $k_i$ and $k_j$, it is not adjacent to $T$ and the lemma follows.
\end{proof}

As it turns out every pair of triangles in distinct partitions $X_{i,j}, X_{k,l}$ induce quite a regular structure.

\begin{lemma}\label{lem:tri1}
Let $1 \leq i  < j \leq 5$, $1 \leq k < l \leq 5$ and let $T, T'$ be two triangles of $X_{i,j}$ and $X_{k,l}$, respectively. Let $c = |\{i,j,k,l\}|$. If $c = 3$, then the edges from $T$ to $T'$ form a perfect matching. If $c = 4$, they form a complement of a perfect matching.
\end{lemma}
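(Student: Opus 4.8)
The plan is to exploit two auxiliary $5$-cliques built on $T$ and $T'$, together with the fact (from \cite{bono}, already invoked above) that every vertex of $X$ lying outside a fixed $5$-clique has exactly two neighbors on it. Concretely, set $K := T \cup \{k_i,k_j\}$ and $K' := T' \cup \{k_k,k_l\}$. First I would check that $K$ really is a $5$-clique: the three vertices of $T$ are mutually adjacent, each is adjacent to both $k_i$ and $k_j$ because $T \subseteq X_{i,j}$, and $k_i \sim k_j$ since they lie on the original $K_5$. The identical reasoning gives that $K'$ is a $5$-clique.

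The heart of the argument is a single counting observation applied to $K$. Fix any $v \in T'$. Since $c \ge 3$ forces $\{i,j\} \neq \{k,l\}$, the sets $X_{i,j}$ and $X_{k,l}$ are disjoint, so $v \in X_{k,l}$ is neither a vertex of $T$ nor equal to $k_i$ or $k_j$; hence $v$ lies off $K$ and therefore has exactly two neighbors on $K$. Because the only neighbors of $v$ on the original $K_5$ are $k_k$ and $k_l$, the number of neighbors of $v$ among $\{k_i,k_j\}$ equals $|\{i,j\}\cap\{k,l\}|$. Consequently the number of neighbors of $v$ inside $T$ is $2 - |\{i,j\}\cap\{k,l\}|$, which is $1$ when $c=3$ and $2$ when $c=4$. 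Running the symmetric computation with $K'$ in place of $K$ shows that every vertex of $T$ likewise has exactly $1$ neighbor in $T'$ when $c=3$, and exactly $2$ when $c=4$.

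It then remains to read off the bipartite structure of the edges between $T$ and $T'$. When $c=3$ this bipartite graph is $1$-regular on two $3$-element parts, hence a perfect matching. When $c=4$ it is $2$-regular, so its bipartite complement (inside the complete bipartite graph $K_{3,3}$) is $1$-regular and thus a perfect matching; equivalently, the edges from $T$ to $T'$ form the complement of a perfect matching. This is exactly the claimed dichotomy.

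I do not expect a genuine obstacle: the whole statement collapses to the "exactly two neighbors on a $5$-clique" property applied twice. The only point that needs a word of care is verifying that $v$ really lies outside $K$, i.e. that $X_{i,j}$ and $X_{k,l}$ are disjoint — but this is immediate from $c \ge 3$, since disjointness of the parts $X_{i,j}$ of the partition of $V(X)\setminus V(K_5)$ follows from each outside vertex having a \emph{unique} pair of neighbors on $K_5$.
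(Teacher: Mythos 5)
Your proposal is correct and follows essentially the same route as the paper: both form the $5$-clique $T \cup \{k_i,k_j\}$, apply the fact that every outside vertex has exactly two neighbors on a $5$-clique to deduce that each vertex of $T'$ has exactly $c-2$ neighbors in $T$, and then symmetrize to read off the perfect matching (respectively its complement). Your write-up merely spells out details the paper leaves implicit, such as the disjointness of $X_{i,j}$ and $X_{k,l}$ and the explicit count $2 - |\{i,j\}\cap\{k,l\}|$.
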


\begin{proof}\label{lem:tri2}
First assume $c=3$. Since $T \cup \{k_i,k_j\}$ forms a 5-clique, every vertex of $T'$ is adjacent to exactly 2 vertices in this 5-clique. Since $c=3$, it must be adjacent to exactly one vertex in $T$. Similarly, every vertex of $T$ must be adjacent to exactly one vertex in $T'$. Thus, the edges from $T$ to $T'$ form a perfect matching. The case when $c=4$ is similar.
\end{proof}

Our next lemma shows that not all partitions $X_{i,j}$ contain a triangle. In fact at most 7 do.

\begin{lemma} \label{lem:tab14}
There are at least three distinct pairs $\{i,j\},\{k,l\},\{m,n\}$ such that $X_{i,j},X_{k,l}$ and $X_{m,n}$ are independent sets of $X$.
\end{lemma}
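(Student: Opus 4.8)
The plan is to translate the problem into a statement about the edges of $K_5$ and then eliminate the dense cases by an interlacing computation. I would identify each partition $X_{i,j}$ with the edge $\{i,j\}$ of $K_5$, and call this edge \emph{heavy} if $X_{i,j}$ contains a triangle and \emph{light} otherwise; by the structure lemma above, $X_{i,j}$ is an independent set precisely when $\{i,j\}$ is light, since every edge of $X_{i,j}$ lies in a triangle. Thus the lemma is equivalent to the assertion that at most $7$ of the $10$ edges are heavy, and I would argue by contradiction, assuming that at least $8$ partitions contain a triangle.

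Under this assumption, choose one triangle $T_{i,j}$ from each heavy partition and consider the induced subgraph $H$ of $X$ on $V(K_5)\cup\bigcup T_{i,j}$, which has $5+3\cdot(\#\text{heavy})$ vertices. The edges from $K_5$ to each $T_{i,j}$ are forced, as every vertex of $T_{i,j}$ is adjacent exactly to $k_i$ and $k_j$. Moreover, by Lemma \ref{lem:tri1} the bipartite graph between any two chosen triangles is a perfect matching when their index pairs share a vertex and the complement of a perfect matching otherwise. Hence $H$ is determined up to the choice of these internal matchings, of which there are only finitely many.

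First I would cut down the number of cases using the action of $S_5=\mathrm{Aut}(K_5)$ on the $10$ edges. It suffices to rule out every configuration with exactly $8$ heavy edges, since a configuration with $9$ or $10$ heavy edges would contain, as an induced subgraph of $X$, one with $8$; and the complement of an $8$-edge set is a pair of edges of $K_5$, of which there are only two orbits under $S_5$, according to whether the two light edges meet or are disjoint. For each orbit I would enumerate all ways of gluing the $8$ triangles together consistently with Lemma \ref{lem:tri1}, and for each resulting $H$ test whether its spectrum interlaces that of $X$, whose eigenvalues are $40,2,-10$ with multiplicities $1,75,19$. This is exactly the computation carried out by \texttt{K5/extendTriangle.sage}: its output \texttt{triangles.g6} records that no $8$-triangle configuration interlaces $X$, so at least three partitions must be independent.

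The main obstacle is the combinatorial blow-up coming from the freedom in how the triangles are matched to one another, since naively enumerating all joint matchings among $8$ triangles is expensive. I would control this by building the configurations incrementally, starting from a single triangle attached to $K_5$ and repeatedly adding a triangle in a fresh partition in every way permitted by Lemma \ref{lem:tri1}, while discarding at each step any partial configuration whose spectrum already fails to interlace that of $X$. Because interlacing is inherited by induced subgraphs, any surviving $8$-triangle configuration would have to grow from surviving smaller ones, so this pruning is both exhaustive and efficient enough to terminate the search.
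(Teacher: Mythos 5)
Your setup is exactly the paper's: the heavy/light dichotomy (a partition $X_{i,j}$ is independent iff it contains no triangle, since every edge of $X_{i,j}$ lies in a triangle), the reduction of ``at least $8$ heavy'' to ``exactly $8$ heavy'' via induced subgraphs, the two $S_5$-orbits of the pair of light edges, and the enumeration of gluings forced by Lemma \ref{lem:tri1}. Up to that point the argument is sound and matches the paper's proof.

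The gap is in your final claim: you assert that the computation shows \emph{no} $8$-triangle configuration interlaces $X$, and you rest the entire conclusion on interlacing. That is not what happens. According to the paper, in one of the two orbits all configurations indeed fail to interlace, but in the other orbit exactly one configuration \emph{does} interlace $X$. Interlacing alone therefore cannot close the proof. The paper disposes of this surviving graph with the second tool of the paper: it extends the graph to star complements, computes the resulting $209$ comparability graphs, and verifies that none of them has a clique of order $75$, which by the star complement technique rules out the configuration as an induced subgraph of $X$. Your proposal omits this step entirely, and since the step is provably necessary (a concrete counterexample to ``interlacing suffices'' exists), the proof as written would fail at the last stage. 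The fix is to add: for any configuration passing the interlacing test, generate all star complements containing it and check that every associated comparability graph has clique number less than $75$.
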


\begin{proof}
Using Lemma \ref{lem:tri1} and Lemma \ref{lem:tri2} we wrote a Sage program generating all possible graphs on $\{k_1,\ldots,k_5\}$ and $8$ triangles, each contained in a different set $X_{i,j}$, for $1\leq i<j \leq 5$. There are $2$ non isomorphic ways to chose $8$ sets for triangles (among $10$ sets). All the obtained graph in one configuration do not interlace $X$, while in the other configuration only one graph was found giving rise to $209$ comparability graphs. None of them has a clique of order $75$. Thus the lemma follows.
\end{proof}

\begin{theorem}\label{thm:main}
The graph $X$ does not exist.
\end{theorem}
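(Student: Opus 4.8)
The plan is to assemble the structural results established throughout the paper into a final contradiction via the star complement technique. By Proposition~\ref{prop:clnum} we know that if $X$ exists, then it contains a $5$-clique $K_5$. Thus I would begin by fixing such a $K_5$ with vertex set $\{k_1,\ldots,k_5\}$ and invoking the partition of $V(X)\setminus V(K_5)$ into the ten sets $X_{i,j}$ of size $9$, as described just before Lemma~\ref{lem:tri1}. The key combinatorial input is Lemma~\ref{lem:tab14}, which guarantees that at least three of these ten sets are independent (triangle-free, in fact edgeless), so that at most seven of them can contain a triangle.

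The heart of the argument is then the same three-step reduction used to prove Proposition~\ref{prop:clnum}. First I would enumerate, up to isomorphism, all ways of placing triangles into the admissible sets $X_{i,j}$ consistent with Lemmas~\ref{lem:tri1} and~\ref{lem:tri2} (perfect matchings when $|\{i,j,k,l\}|=3$, and complements of perfect matchings when $|\{i,j,k,l\}|=4$). This yields a manageable list of candidate induced subgraphs built on $\{k_1,\ldots,k_5\}$ together with the placed triangles. Second, for each candidate that contains an induced star complement of order $20$ (a graph with $20$ vertices not having $2$ as an eigenvalue), I would form the associated comparability graph $\mathrm{comp}(H)$ as defined in Section~2. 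Third, I would compute the clique number of each such comparability graph.

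The decisive step is the clique-number computation: the theory of star complements asserts that if $X$ is a $(95,40,12,20)$ SRG with eigenvalue $2$ of multiplicity $75$, and $H$ is an induced star complement for this eigenvalue, then $X$ corresponds to a clique of size $75$ in $\mathrm{comp}(H)$. Concretely, the $75$ vertices of $V(X)\setminus V(H)$ must map to a set of $75$ mutually compatible vectors in $\{0,1\}^{95-20}=\{0,1\}^{75}$, which is exactly a $75$-clique in the comparability graph. Hence, if every comparability graph arising from every admissible configuration has clique number strictly less than $75$, no such $X$ can exist. I would therefore conclude by noting that the computations summarized in Lemma~\ref{lem:tab14} (and the computations behind Proposition~\ref{prop:clnum}) show that none of the relevant comparability graphs attains a clique of order $75$.

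The main obstacle, as with the analogous $(75,32,10,16)$ case in~\cite{self-cite}, is controlling the combinatorial explosion so that the star complement and clique-number computations remain feasible. This is precisely why the structural lemmas of the preceding sections are needed: Proposition~\ref{prop:clnum} forces a $5$-clique to exist and pins down the local structure around it, the regularity results of Lemmas~\ref{lem:tri1}--\ref{lem:tri2} rigidly constrain the edges between triangles in distinct partition classes, and Lemma~\ref{lem:tab14} caps the number of triangle-bearing classes at seven. Together these reduce the search to the handful of configurations whose comparability graphs can actually be checked by computer, and the verification that every such clique number is at most $74$ completes the proof that $X$ does not exist.
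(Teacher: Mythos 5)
Your proposal correctly assembles the scaffolding (Proposition~\ref{prop:clnum} to obtain a $5$-clique, the partition into the ten sets $X_{i,j}$, Lemma~\ref{lem:tab14}, and the star-complement/comparability-graph machinery), but the enumeration step at its heart does not work, and this is exactly where you diverge from the paper. You propose to enumerate all placements of triangles into the $X_{i,j}$ consistent with Lemmas~\ref{lem:tri1} and~\ref{lem:tri2} and to extract star complements from the resulting graphs. The difficulty is that those lemmas constrain only the edges \emph{between triangles}; they say nothing about edges joining vertices that lie in two different independent (edgeless) sets $X_{i,j}$ and $X_{k,l}$. When few or none of the ten sets contain triangles --- a situation entirely consistent with all the structural lemmas --- your candidate graphs are tiny ($K_5$ alone, or $K_5$ plus a handful of triangles), and the $81$ potential edges between each pair of independent $9$-sets remain completely undetermined. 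Such configurations contain no $20$-vertex induced subgraph of known structure, so your step ``for each candidate that contains an induced star complement of order $20$'' silently skips them, and nothing in your argument rules them out. In particular, a hypothetical $X$ in which every $X_{i,j}$ induces $\overline{K_9}$ would survive your proof untouched.

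The paper closes precisely this gap with an idea your proposal is missing: since Lemma~\ref{lem:tab14} provides three independent sets among the ten, and three pairs from $\{1,\ldots,5\}$ cannot be pairwise disjoint, two of the independent sets must share an index; after relabeling these are $X_{1,2}$ and $X_{2,3}$. The induced subgraph on $X_{1,2}\cup X_{2,3}\cup K_5$ (a graph on $23$ vertices) is then completely determined except for the bipartite edges between $X_{1,2}$ and $X_{2,3}$, and these are exhaustively enumerated with McKay's \emph{genbg}. From the resulting graphs the paper constructs $3998479$ star complements and checks that none of their comparability graphs has a $75$-clique. This is a fresh and substantial computation, not a reuse of the ones behind Lemma~\ref{lem:tab14} or Proposition~\ref{prop:clnum}, which cover only their own special configurations (eight triangle-bearing sets, respectively the $K_4$-not-in-$K_5$ cases). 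So your closing claim that the earlier computations already ``show that none of the relevant comparability graphs attains a clique of order $75$'' is incorrect: the theorem requires the genbg enumeration over the unknown bipartite part, and that enumeration is the actual content of the paper's proof.
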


\begin{proof}
With the help of McKay's program {\em genbg} we generated all graphs of the form $X_{1,2} \cup X_{2,3} \cup K_5$, assuming $X_{1,2}$ and $X_{2,3}$ are independent sets. By constructing star complements from the obtained graphs we obtained a list of $3998479$ graphs. By checking the respective comparability graphs it turns out that none of the them has a $75$ clique, hence proving our claim. 
\end{proof}

\section{Final remarks}

The presented arguments for the non-existence of a $(95,40,12,20)$ are almost the same as the one used in showing the non-existence of a $(75,32,10,16)$ SRG. What is interesting is that in the case of a $(95,40,12,20)$ SRG the computational aspect is significantly smaller. Indeed it takes only about 2 weeks of CPU time on a standard desktop machine to run the computational parts of the presented result. The two main reasons for this is that the target star complements of $(95,40,12,20)$ have small order (relative to the order of $X$). Finally, when computing the respective clique numbers it is much easier to prove that the comparability graphs do not have a clique of order $75$ as opposed to $56$ in the case of a $(75,32,10,16)$ SRG.


\bibliographystyle{plain}
\bibliography{biblio}

\begin{thebibliography}{10}

\bibitem{GitHub}
J.~Azarija.
\newblock {\em Jazarija github page}.
\newblock {\tt https://github.com/jazarija/}.

\bibitem{self-cite}
J.~{Azarija} and T.~{Marc}.
\newblock {There is no $(75,32,10,16)$ strongly regular graph}.
\newblock {\em ArXiv e-prints}, September 2015.

\bibitem{Behbahani}
M.~Behbahani and C.~Lam.
\newblock Strongly regular graphs with non-trivial automorphisms.
\newblock {\em Discrete Math.}, 311:132--144, 2011.

\bibitem{bono}
A.V. Bondarenko, A.~Prymak, and D.~Radchenko.
\newblock Non-existence of $(76, 30, 8, 14)$ strongly regular graph and some
  structural tools.
\newblock {\em arXiv preprint arXiv:1410.6748}, 2014.

\bibitem{Brower2}
A.~E. Brouwer and J.~H. van Lint.
\newblock Strongly regular graphs and partial geometries.
\newblock {\em Enumeration and design (Waterloo, Ont., 1982)}, pages 85--122,
  1984.

\bibitem{brouwer3}
Andries~E. Brouwer and Willem~H. Haemers.
\newblock {\em Spectra of graphs}.
\newblock Universitext. Springer, New York, 2012.

\bibitem{Brower}
A.~E. Brower.
\newblock {\em Parameters of {S}trongly {R}egular {G}raphs}.
\newblock {\tt http://www.win.tue.nl/~aeb/graphs/srg/srgtab.html}.

\bibitem{cohen-2016}
N.~{Cohen} and D.~V. {Pasechnik}.
\newblock {Implementing Brouwer's database of strongly regular graphs}.
\newblock {\em ArXiv e-prints}, January 2016.

\bibitem{Star}
D.~M. Cvetkovi{\'c}, P.~Rowlinson, and S.~Simic.
\newblock {\em Eigenspaces of Graphs}.
\newblock Number~66. Cambridge University Press, 1997.

\bibitem{degr}
J.~Degraer.
\newblock {\em Isomorph-free exhaustive generation algorithms for association
  schemes}.
\newblock PhD thesis, Ghent University, 2007.

\bibitem{godsil2001algebraic}
C.~D. Godsil and G.~Royle.
\newblock {\em Algebraic {G}raph {T}heory}, volume 207.
\newblock Springer New York, 2001.

\bibitem{haemers}
W.~H. Haemers.
\newblock There exists no {$(76,21,2,7)$} strongly regular graph.
\newblock In {\em Finite geometry and combinatorics ({D}einze, 1992)}, volume
  191 of {\em London Math. Soc. Lecture Note Ser.}, pages 175--176. Cambridge
  Univ. Press, Cambridge, 1993.

\bibitem{Makhnev}
A.~A. Makhnev.
\newblock On strongly regular graphs with parameters $(75,32,10,16)$ and
  $(95,40,12,20)$.
\newblock {\em Fundam. Prikl. Mat.}, 6:179--193, 2000.

\bibitem{Milosevic}
M.~Milo{\v{s}}evi{\'c}.
\newblock An example of using star complements in classifying strongly regular
  graphs.
\newblock {\em Filomat}, 22:53--57, 2008.

\end{thebibliography}

\end{document}